\theoremstyle{plain}
\newtheorem{theorem}{Theorem}
\newtheorem{proposition}[theorem]{Proposition}
\theoremstyle{remark}
\theoremstyle{definition}
\title[Rigidity of area-minimizing free boundary surfaces]{Rigidity of area-minimizing free boundary surfaces in mean convex three-manifolds}
\author{Lucas C. Ambrozio}
\address{Instituto de Matem\'atica Pura e Aplicada (IMPA) \\ Estrada Dona Castorina 110 \\ 22460-320 Rio de Janeiro \\ Brazil; lambroz@impa.br}
\thanks{The author was supported by CNPq-Brazil and FAPERJ}
\begin{document}

\begin{abstract}
  We prove a local splitting theorem for three-manifolds with mean convex boundary
  and scalar curvature bounded from below that contain certain locally area-minimizing free boundary surfaces.
  Our methods are based on those of Micallef and Moraru \cite{MM}. We use this local result to establish
  a global rigidity theorem for area-minimizing free boundary disks. In the negative scalar curvature case,
  this global result implies a rigidity theorem for solutions of the Plateau problem with length-minimizing boundary.
\end{abstract}

\maketitle

\section{Introduction and statements of the results}

\indent Let $M$ be a Riemannian manifold with boundary $\partial M$. Free boundary minimal submanifolds arise as critical
points of the area functional when one restricts to variations that preserve $\partial M$ (but not ne\-cessarily leave
it fixed). Many beautiful known results about closed minimal surfaces could guide the formulation
of analogous interesting questions about free boundary minimal surfaces.
In this paper, inspired by the rigidity theorems for area-minimizing closed surfaces proved in \cite{BBN}, \cite{CG}, \cite{MM} and \cite{N},
we investigate rigidity of area-minimizing free boundary surfaces in Riemannian three-manifolds. \\
\indent Schoen and Yau, in their celebrated joint work, discovered interesting relations between the scalar curvature
of a three-dimensional manifold and the topology of stable minimal surfaces inside it, which emerge when one uses the second
variation formula for the area, the Gauss equation and the Gauss-Bonnet theorem. An example is given 
by the following
\begin{theorem}[Schoen and Yau] \label{SchYau1}
  Let $M$ be an oriented Riemannian three-manifold with positive scalar curvature. Then $M$ has
  no immersed orientable closed stable minimal surface of positive genus.
\end{theorem}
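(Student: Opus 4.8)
The plan is to test the stability inequality against the constant function and then convert the outcome into topological information using the Gauss equation and the Gauss--Bonnet theorem. Let $\Sigma$ be a closed orientable surface immersed in $M$ as a stable minimal surface; I will show that $\Sigma$ has genus zero. Since $M$ is oriented and $\Sigma$ is orientable, the immersion carries a globally defined unit normal $\nu$, so the second variation of area along a normal variation $\varphi\nu$ is the standard index form $\int_\Sigma\big(|\nabla\varphi|^2 - (|A|^2 + \mathrm{Ric}(\nu,\nu))\varphi^2\big)\,dA$, with $A$ the second fundamental form of $\Sigma$ and $\mathrm{Ric}$ the Ricci curvature of $M$. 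As $\Sigma$ is compact without boundary, $\varphi \equiv 1$ is admissible, so stability yields
\[
  \int_\Sigma\big(|A|^2 + \mathrm{Ric}(\nu,\nu)\big)\,dA \;\le\; 0 .
\]

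Next I would rewrite the integrand intrinsically. Writing $R$ for the scalar curvature of $M$ and $K$ for the Gauss curvature of $\Sigma$, the Gauss equation for a minimal (so $H=0$) surface in a three-manifold gives $2K = R - 2\,\mathrm{Ric}(\nu,\nu) - |A|^2$, hence $|A|^2 + \mathrm{Ric}(\nu,\nu) = \tfrac12 R + \tfrac12|A|^2 - K$. Substituting this into the stability inequality and using Gauss--Bonnet, $\int_\Sigma K\,dA = 2\pi\chi(\Sigma)$, I get
\[
  2\pi\,\chi(\Sigma) \;\ge\; \tfrac12\int_\Sigma R\,dA \;+\; \tfrac12\int_\Sigma|A|^2\,dA .
\]
Because $R > 0$ everywhere on $M$ and $|A|^2 \ge 0$, the right-hand side is strictly positive, so $\chi(\Sigma) > 0$; for a closed orientable surface this forces $\chi(\Sigma) = 2$, i.e.\ genus zero. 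In particular $M$ can carry no stable minimal surface of positive genus.

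I expect every computation here to be routine; the points that really deserve care are structural. First, for an immersion (as opposed to an embedding) one must be sure that ``stable'' means exactly non-negativity of the index form above --- this is where orientability of both $M$ and $\Sigma$ is used, namely to produce the global unit normal $\nu$, and it suffices that only normal variations of the area are considered. Second, and this is the feature that the rest of the paper develops, the strict hypothesis $R > 0$ entered only through the strict positivity of $\int_\Sigma R\,dA$; weakening it to $R \ge 0$ still gives $\chi(\Sigma) \ge 0$, so the torus becomes the borderline case, and analysing when each of the inequalities above is an equality is precisely the mechanism behind the rigidity theorems that motivate this work.
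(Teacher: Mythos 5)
Your proof is correct and follows exactly the strategy the paper itself indicates for this result (second variation with the constant test function, Gauss equation, Gauss--Bonnet); the paper cites Schoen--Yau and sketches this method in the introduction rather than reproving it, and your argument is also the closed-surface prototype of the paper's own proof of Proposition \ref{riginf} for free boundary surfaces.
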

\indent Schoen and Yau used this to prove that any Riemannian metric with non-negative scalar curvature on the three-torus
must be flat. More generally, they proved the following theorem (see \cite{SY}):
\begin{theorem} [Schoen and Yau] \label{SchYau2}
  Let $M$ be a closed oriented three-manifold. If the fundamental group of $M$ contains a subgroup isomorphic
  to the fundamental group of the two-torus, then any Riemannian metric on $M$
  with nonnegative scalar curvature must be flat.
\end{theorem}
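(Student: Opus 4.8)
\emph{Proof idea.} The plan is to run the Schoen--Yau second variation argument on a minimal torus and then promote the resulting infinitesimal rigidity to a global splitting. So let $g$ be a metric on $M$ with nonnegative scalar curvature $R_g\ge 0$; we must show $g$ is flat. By hypothesis there is a subgroup $\Gamma\le\pi_1(M)$ isomorphic to $\pi_1(T^2)\cong\mathbb{Z}^2$, hence a continuous map $f\colon T^2\to M$ that is injective on fundamental groups. Passing, if necessary, to the orientation double cover changes neither the sign of the scalar curvature nor the property of being flat, so we may assume $M$ is orientable; then any immersed torus in $M$ is automatically two-sided. Now minimize area in the homotopy class of $f$: because $f$ is $\pi_1$-injective, a minimizing sequence cannot lose genus by bubbling off spheres, and the existence theory for incompressible minimal surfaces (Schoen--Yau, Sacks--Uhlenbeck, Meeks--Yau) produces a branched minimal immersion of $T^2$ realizing the infimum of area in its homotopy class; after the usual reductions we obtain a two-sided stable minimal torus $\Sigma\subset M$ (which may be taken embedded).

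\emph{Infinitesimal rigidity along $\Sigma$.} Let $\nu$ be a unit normal, $A$ the second fundamental form, and $L=\Delta_\Sigma+\mathrm{Ric}_g(\nu,\nu)+|A|^2$ the Jacobi operator. Testing the stability inequality with the constant function $1$ gives $\int_\Sigma\bigl(\mathrm{Ric}_g(\nu,\nu)+|A|^2\bigr)\le 0$. The traced Gauss equation for the minimal surface $\Sigma$ reads $2K_\Sigma=R_g-2\,\mathrm{Ric}_g(\nu,\nu)-|A|^2$, so $\mathrm{Ric}_g(\nu,\nu)+|A|^2=\tfrac12 R_g-K_\Sigma+\tfrac12|A|^2$; integrating and using $\int_\Sigma K_\Sigma=2\pi\chi(T^2)=0$ by Gauss--Bonnet yields
\[ 0\ \ge\ \tfrac12\int_\Sigma R_g\ +\ \tfrac12\int_\Sigma |A|^2 . \]
As $R_g\ge 0$ and $|A|^2\ge 0$, both integrals vanish: $R_g\equiv 0$ on $\Sigma$ and $\Sigma$ is totally geodesic. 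Moreover the constant function then minimizes the Rayleigh quotient of $-L$, whose first eigenvalue is thus $0$, so it lies in the kernel of $L$, giving $\mathrm{Ric}_g(\nu,\nu)+|A|^2\equiv 0$; combined with $A\equiv 0$ this forces $\mathrm{Ric}_g(\nu,\nu)\equiv 0$ and hence $K_\Sigma\equiv 0$. Thus $\Sigma$ is a flat, totally geodesic torus along which the scalar curvature vanishes --- this is exactly the mechanism behind Theorem \ref{SchYau1}, here examined in its borderline case.

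\emph{From infinitesimal to global splitting.} Since the kernel of $L$ is one-dimensional, one cannot foliate directly by minimal tori; instead, via a Lyapunov--Schmidt reduction one builds a smooth family $\{\Sigma_t\}_{|t|<\varepsilon}$ of constant-mean-curvature tori, homotopic to $\Sigma=\Sigma_0$, foliating a neighborhood $U$ of $\Sigma$, and writes $g=\varphi_t^2\,dt^2+g_t$ on $U$ with lapse $\varphi_t>0$. Because $\Sigma$ minimizes area in its homotopy class while each $\Sigma_t$ is homotopic to it, $t\mapsto\mathrm{Area}(\Sigma_t)$ has an interior minimum at $0$, which forces a sign on the mean curvature $H(t)$ of $\Sigma_t$; inserting this into the evolution equation for $H(t)$ in terms of $\varphi_t$, together with the Gauss equation, Gauss--Bonnet on each $\Sigma_t$, and $R_g\ge 0$, produces a differential inequality for $\mathrm{Area}(\Sigma_t)$ that can hold only if $H(t)\equiv 0$, $R_g\equiv 0$, $A_t\equiv 0$, $\mathrm{Ric}_g(\nu_t,\nu_t)\equiv 0$ on $U$ and $\varphi_t$ is spatially constant (being harmonic on a torus); after reparametrizing $t$ this realizes $U$ as a flat Riemannian product $\Sigma\times(-\varepsilon,\varepsilon)$. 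Finally, the set of $t$ for which $\Sigma\times\{t\}$ lies in such a flat product neighborhood is open, and a maximality argument --- using compactness of $M$, or passing to the cover with fundamental group $\Gamma$ and noting that the tori $\Sigma_t$ remain area minimizing in a fixed homotopy class --- shows it is also closed; hence the splitting is global and $g$ is flat.

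\emph{Main obstacle.} The delicate points are the existence and regularity of the stable minimal torus in the first step (a substantial piece of geometric analysis) and, above all, the passage from the infinitesimal rigidity to the local product and its globalization in the last step: this is precisely the local splitting phenomenon established for area-minimizing closed surfaces in \cite{CG}, \cite{BBN} and \cite{MM}, whose free boundary analogue is developed in the present paper.
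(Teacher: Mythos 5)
Your proposal takes a genuinely different route from the one the paper sketches. The paper's argument for Theorem~\ref{SchYau2} is short and indirect: Schoen--Yau's minimization yields a stable minimal torus in $(M,g)$ for \emph{any} metric $g$; Theorem~\ref{SchYau1} forbids such a torus when $R^M>0$; and the Kazdan--Warner result \cite{KW} says any non-flat metric with $R^M\ge 0$ on a closed three-manifold can be deformed to one with $R^M>0$. Combining these, a non-flat $g$ with $R^M\ge 0$ would yield a contradiction, so $g$ must be flat --- no splitting argument or foliation is needed. Your proof, by contrast, stays with the original metric $g$ and runs the full rigidity argument: infinitesimal rigidity of $\Sigma$ (this is the Fischer-Colbrie--Schoen observation mentioned in the paper), then a CMC foliation, then a differential inequality forcing the local product splitting, then globalization. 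This is precisely the Cai--Galloway theorem (Theorem~\ref{CaiGal}, stated later in the introduction), which the paper presents as a historically \emph{subsequent} and harder strengthening of Theorem~\ref{SchYau2}. Both approaches are valid in principle, but the paper's route buys brevity by offloading the hard work onto Kazdan--Warner, while yours buys more geometric information (the splitting) at the cost of doing all the foliation analysis. Two small imprecisions in your sketch: (i) it is not the area minimality that ``forces a sign on $H(t)$'' --- the sign of $H(t)$ comes from the differential inequality driven by $R^M\ge 0$ and Gauss--Bonnet, and the area-minimizing hypothesis is then used to turn the resulting area comparison into equality; and (ii) the assertion that the minimizing torus ``may be taken embedded'' requires justification (Schoen--Yau's procedure produces an immersed torus in general), and the foliation/local splitting argument is usually formulated for embedded surfaces.
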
 
\indent The hypothesis on the fundamental group implies that there exists a continuous map $f$ from the two-torus to $M$
that induces an injective homomorphism $f_{*}$ on the fundamental groups. Then the idea is to apply a minimization procedure
among maps that induce the same homomorphism $f_{*}$ in order to obtain an immersed stable minimal two-torus in $(M,g)$
for any Riemannian metric $g$. Since any non-flat Riemannian metric with nonnegative scalar curvature on a closed
three-manifold can be deformed to a metric with positive scalar curvature (see \cite{KW}), the theorem follows. \\
\indent In \cite{FCS}, Fischer-Colbrie and Schoen observed that an immersed, two-sided, stable minimal two-torus in a Riemannian
three-manifold with nonnegative scalar curvature must be flat and totally geodesic, and conjectured that Theorem \ref{SchYau2} would
hold if one merely assume the existence of an area-minimizing two-torus. This conjecture was established by Cai and Galloway \cite{CG}.
More precisely, they proved that if $M$ is a closed Riemannian three-manifold which contains a two-sided embedded two-torus
that minimizes the area in its isotopy class, then $M$ is flat. The fundamental step was the following local result:
\begin{theorem}[Cai and Galloway] \label{CaiGal}
  If a Riemannian three-manifold with nonnegative scalar curvature contains an embedded, two-sided, locally area-minimizing
  two-torus $\Sigma$, then the metric is flat in some neighborhood of $\Sigma$.
\end{theorem}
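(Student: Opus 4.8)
The plan is to upgrade the Schoen--Yau infinitesimal rigidity of stable minimal tori to a genuine local splitting by means of a constant mean curvature foliation, as Cai and Galloway do in \cite{CG}. Being two-sided and locally area-minimizing, $\Sigma$ is a stable minimal surface, so I would first extract infinitesimal rigidity: plugging the constant function into the stability inequality, using the Gauss equation to eliminate $\mathrm{Ric}(\nu,\nu)$, and invoking $\int_\Sigma K_\Sigma=2\pi\chi(\Sigma)=0$, one is forced to $\tfrac12\int_\Sigma(|A|^2+R)\le 0$, hence (since $R\ge 0$) $A\equiv 0$ and $R|_\Sigma\equiv 0$. With $A\equiv 0$ the stability operator becomes $-\Delta_\Sigma+K_\Sigma$ and has lowest eigenvalue $0$, attained by the constants, which forces $K_\Sigma\equiv 0$ and therefore $\mathrm{Ric}(\nu,\nu)|_\Sigma\equiv 0$. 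Thus $\Sigma$ is a flat, totally geodesic torus along which both $R$ and $\mathrm{Ric}(\nu,\nu)$ vanish.

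Because the Jacobi operator of $\Sigma$ now reduces to $\Delta_\Sigma$, whose first eigenvalue is $0$ with constant eigenfunctions, a now-standard application of the implicit function theorem produces an $\varepsilon>0$ and a smooth foliation $\{\Sigma_t\}_{|t|<\varepsilon}$ of a neighborhood $U$ of $\Sigma=\Sigma_0$ by normal graphs $\Sigma_t=\{\exp_x(w(x,t)\nu(x)):x\in\Sigma\}$, with $\partial_t w(\cdot,0)\equiv 1$, such that each $\Sigma_t$ has constant mean curvature $H(t)$; shrinking $\varepsilon$, the variation field $\partial_t$ has everywhere positive normal component $f_t\nu_t$ with $f_t>0$ and $f_0\equiv 1$. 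The heart of the argument is the following monotonicity. Writing $q_t=|A_t|^2+\mathrm{Ric}(\nu_t,\nu_t)$, the evolution of the mean curvature reads $H'(t)=-\Delta_{\Sigma_t}f_t-q_tf_t$, and since $\Sigma_t$ is CMC the left-hand side is a constant on $\Sigma_t$. Dividing by $f_t>0$, integrating over $\Sigma_t$, using $\int_{\Sigma_t}(\Delta_{\Sigma_t}f_t)/f_t=\int_{\Sigma_t}|\nabla f_t|^2/f_t^2\ge 0$, and using the Gauss equation together with $\int_{\Sigma_t}K_{\Sigma_t}=0$ to evaluate $\int_{\Sigma_t}q_t=\tfrac12\int_{\Sigma_t}(|A_t|^2+R)+\tfrac12 H(t)^2|\Sigma_t|\ge 0$, one obtains
\[
H'(t)\int_{\Sigma_t}\frac{d\Sigma_t}{f_t}\;=\;-\int_{\Sigma_t}\frac{|\nabla f_t|^2}{f_t^2}\;-\;\int_{\Sigma_t}q_t\;\le\;0 .
\]
Hence $H$ is nonincreasing on $(-\varepsilon,\varepsilon)$ and $H(0)=0$.

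By the first variation of area, $t\mapsto|\Sigma_t|$ has derivative $H(t)\int_{\Sigma_t}f_t$, which by the previous step is $\le 0$ for $t\ge 0$ and $\ge 0$ for $t\le 0$: so $t=0$ is a local \emph{maximum} of the area. On the other hand each $\Sigma_t$ is a normal graph over $\Sigma$ contained in $U$, hence an admissible competitor, so $|\Sigma_t|\ge|\Sigma|$ for all $t$; therefore $|\Sigma_t|\equiv|\Sigma|$, which forces $H\equiv 0$ (every leaf is minimal) and then equality throughout the displayed estimate, i.e. $\nabla f_t\equiv 0$ and $\int_{\Sigma_t}q_t=0$ for every $t$. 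Using $R\ge 0$ and $|A_t|^2\ge 0$ this gives $A_t\equiv 0$, $R\equiv 0$ and $\mathrm{Ric}(\nu_t,\nu_t)\equiv 0$ throughout $U$. So $U$ is foliated by totally geodesic flat tori whose lapse $f_t$ is constant on each leaf; since the lapse is leafwise constant one checks (using $[\partial_t,Y]=0$ for coordinate fields $Y$ tangent to the leaves) that $\nabla_N N\equiv 0$ for the unit normal field $N$, so the foliation is equidistant, the leaves' second fundamental forms vanish, and the induced metrics satisfy $\partial_s g_s\equiv 0$; thus the metric on $U$ is the flat product $ds^2+g_0$.

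The step requiring most care is the construction of the constant mean curvature foliation through $\Sigma$, whose stability operator is degenerate ($\lambda_1=0$). The decisive computational point is the monotonicity $H'(t)\le 0$: it comes from dividing the mean-curvature evolution by the positive lapse and using Gauss--Bonnet on each genus-one leaf, and it is precisely this sign that makes $t=0$ a local maximum of area, so that the area-minimizing hypothesis forces the area --- and hence all the geometry --- to be constant along the foliation.
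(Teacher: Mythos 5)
Your proposal is correct and follows the Micallef--Moraru unified method (CMC normal-graph foliation, lapse-weighted monotonicity of $H(t)$ via Gauss--Bonnet, first variation of area forcing $|\Sigma_t|\le|\Sigma_0|$) that this paper highlights as the common approach to Cai--Galloway and its descendants and then adapts in Propositions \ref{foliation} and \ref{anarea} and Theorem \ref{mainA} to the free-boundary setting; your argument is exactly the $\inf R^M=0$, $\partial\Sigma=\emptyset$, $\chi(\Sigma)=0$ case of that scheme. Note that the paper itself does not reprove Theorem \ref{CaiGal} --- it is cited to \cite{CG} --- but your strategy matches the paper's sketch and its own proofs closely; the only small inaccuracy is attributing this streamlined lapse-weighted monotonicity argument to Cai--Galloway's original paper rather than to \cite{MM}.
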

\indent In recent years, some similar results were proven for closed surfaces other than tori under different scalar curvature hypotheses.
In particular, we mention the theorems of Bray, Brendle and Neves \cite{BBN} and Nunes \cite{N}.
\begin{theorem}[Bray, Brendle and Neves] \label{BraBreNev}
  Let $(M, g)$ be a three-manifold with scalar curvature greater than or equal to $2$. If $\Sigma$ is an embedded two-sphere that is
  locally area-minimizing, then $\Sigma$ has area less than or equal to $4\pi$. Moreover, if equality holds, then $\Sigma$ with the
  induced metric $g_\Sigma$ has constant Gaussian curvature equal to $1$ and there is a neighborhood of $\Sigma$ in $M$ that is isometric to 
  $((-\epsilon,\epsilon)\times \Sigma, dt^2+g_{\Sigma})$.
\end{theorem}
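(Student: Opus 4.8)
The plan is to combine the three tools that drive every result quoted above: the second variation of area (available because a locally area-minimizing surface is in particular a stable minimal surface), the traced Gauss equation relating the ambient scalar curvature $R_M$ to the Gaussian curvature $K_\Sigma$ and the second fundamental form $A$ of $\Sigma$, and the Gauss--Bonnet theorem applied to the two-sphere $\Sigma$.

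For the area estimate I would first write the stability inequality
\[
\int_\Sigma |\nabla\varphi|^2\,dA \;\ge\; \int_\Sigma \big(\mathrm{Ric}_M(\nu,\nu)+|A|^2\big)\,\varphi^2\,dA, \qquad \varphi\in C^\infty(\Sigma),
\]
and test it with $\varphi\equiv 1$, obtaining $\int_\Sigma (\mathrm{Ric}_M(\nu,\nu)+|A|^2)\,dA\le 0$. Because $\Sigma$ is minimal, the traced Gauss equation yields the pointwise identity $\mathrm{Ric}_M(\nu,\nu)+|A|^2 = \tfrac12 R_M + \tfrac12|A|^2 - K_\Sigma$. Substituting this and using $\int_\Sigma K_\Sigma\,dA = 4\pi$ (Gauss--Bonnet, $\Sigma$ being a two-sphere) gives $\tfrac12\int_\Sigma R_M\,dA + \tfrac12\int_\Sigma|A|^2\,dA \le 4\pi$; since $R_M\ge 2$ and $|A|^2\ge 0$ this forces $\mathrm{area}(\Sigma)\le 4\pi$.

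Now suppose $\mathrm{area}(\Sigma)=4\pi$. Then every inequality above is an equality, so $R_M\equiv 2$ and $A\equiv 0$ along $\Sigma$, the stability inequality holds with equality for $\varphi\equiv 1$, and therefore the constant function $1$ solves the Jacobi equation $\Delta\varphi+(\mathrm{Ric}_M(\nu,\nu)+|A|^2)\varphi=0$, forcing $\mathrm{Ric}_M(\nu,\nu)\equiv 0$ along $\Sigma$, and then the Gauss equation gives $K_\Sigma\equiv\tfrac12 R_M=1$. This already establishes that $(\Sigma,g_\Sigma)$ has constant Gaussian curvature $1$. For the local splitting I would deform $\Sigma$: since $1$ lies in the kernel of the Jacobi operator, a by now standard implicit function theorem argument (a Lyapunov--Schmidt reduction of the prescribed mean curvature equation) produces a foliation $\{\Sigma_t\}_{t\in(-\epsilon,\epsilon)}$ of a neighborhood of $\Sigma=\Sigma_0$ by embedded two-spheres of constant mean curvature $\lambda(t)$, realized as normal graphs over $\Sigma$ with positive lapse function $\rho_t$ and $\rho_0\equiv 1$. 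Differentiating the constant mean curvature condition gives, on each leaf, $\Delta_{\Sigma_t}\rho_t+(|A_t|^2+\mathrm{Ric}_M(\nu_t,\nu_t))\rho_t=\lambda'(t)$; dividing by $\rho_t>0$ and integrating produces
\[
\int_{\Sigma_t}\big(|A_t|^2+\mathrm{Ric}_M(\nu_t,\nu_t)\big)\,dA_t \;\le\; \lambda'(t)\int_{\Sigma_t}\rho_t^{-1}\,dA_t,
\]
while the Gauss equation, Gauss--Bonnet, $R_M\ge 2$, and the minimizing property of $\Sigma_0$ (which forces $\mathrm{area}(\Sigma_t)\ge 4\pi$) make the left-hand side nonnegative; since $\rho_t>0$ this shows $\lambda'(t)\ge 0$, so $\lambda$ is nondecreasing with $\lambda(0)=0$. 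Feeding this into the first variation formula $\frac{d}{dt}\,\mathrm{area}(\Sigma_t)=-\lambda(t)\int_{\Sigma_t}\rho_t\,dA_t$ shows $\mathrm{area}(\Sigma_t)\le 4\pi$ on each side of $t=0$, which together with the lower bound forces $\mathrm{area}(\Sigma_t)\equiv 4\pi$ and $\lambda\equiv 0$. Then the equality discussion applies to every leaf: each $\Sigma_t$ is totally geodesic with $R_M\equiv 2$, $\mathrm{Ric}_M(\nu_t,\nu_t)\equiv 0$, $K_{\Sigma_t}\equiv 1$, and $\rho_t$ constant. After reparametrizing $t$ so that $\rho_t\equiv 1$, the ambient metric on the neighborhood takes the form $dt^2+g_t$ with $\partial_t g_t=2A_t=0$, hence $g_t\equiv g_\Sigma$ and the neighborhood is isometric to $((-\epsilon,\epsilon)\times\Sigma,\,dt^2+g_\Sigma)$.

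I expect the main obstacle to be this last step: upgrading the constant mean curvature foliation to a foliation by totally geodesic minimal spheres. This is where the one-sided information coming from minimality must be combined with a genuinely quantitative use of the scalar curvature bound along every leaf, not merely along $\Sigma$, and one has to make the monotonicity of $\lambda(t)$ interlock with the two-sided pinching of the areas. By comparison, the existence and regularity of the foliation, and the bookkeeping of signs in the first and second variation formulas, are routine.
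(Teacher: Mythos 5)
Your proposal is correct, and the overall architecture (stability plus traced Gauss plus Gauss--Bonnet for the area bound; infinitesimal rigidity from equality; a CMC normal‐graph foliation via the implicit function theorem; area pinching along the leaves; a parallel normal field yielding the product metric) is exactly the blueprint the paper sketches after stating Theorems~\ref{BraBreNev} and~\ref{Nun}. Where you depart from the paper (and from \cite{BBN}, \cite{N}, \cite{MM}) is in the step you flag yourself as the ``main obstacle'': showing $|\Sigma_t|\le|\Sigma_0|$. You use the area-minimizing hypothesis \emph{upstream}: since $\Sigma_0$ is locally minimizing, $|\Sigma_t|\ge 4\pi$, which combined with the traced Gauss equation and Gauss--Bonnet makes $\int_{\Sigma_t}(\mathrm{Ric}(\nu_t,\nu_t)+|A_t|^2)\,dA_t\ge 0$, so the integrated Jacobi identity immediately gives the correct sign of $H'(t)$ and hence the upper bound $|\Sigma_t|\le 4\pi$; the two bounds then pinch. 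This is a genuine shortcut. By contrast, the paper adopts the Micallef--Moraru route in Proposition~\ref{anarea}: there only infinitesimal rigidity of $\Sigma_0$ is assumed (not minimization), and the inequality $H'(t)\int\rho_t^{-1}\le \tfrac12\inf R^M(|\Sigma_0|-|\Sigma_t|)$ is handled by a mean-value-theorem/ODE case analysis on the sign of $\inf R^M$, with the minimizing hypothesis only invoked afterward (in Theorem~\ref{mainA}) to upgrade $|\Sigma_t|\le|\Sigma_0|$ to equality. Your argument buys simplicity and avoids the case analysis entirely, at the price of being specific to the area-minimizing setting; the paper's version buys a more general monotonicity statement that is reused across the sign cases of $\inf R^M$ and carries over verbatim to the free boundary situation. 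Both are correct here, and your equality discussion on each leaf and the final integration $\partial_t g_t=2\rho_t A_t\equiv 0$ are handled in the same way as the paper.
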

\begin{theorem}[Nunes] \label{Nun}
  Let $(M, g)$ be a three-manifold with scalar curvature greater than or equal to $-2$. If $\Sigma$ is an embedded, two-sided,
  locally area-minimizing closed surface with genus $g(\Sigma)$ greater than $1$, then $\Sigma$ has area greater than or equal to 
  $4\pi(g(\Sigma)-1)$. Moreover, if equality holds, then $\Sigma$ with the induced metric $g_\Sigma$ has constant Gaussian curvature equal
  to $-1$ and there is a neighborhood of $\Sigma$ in $M$ that is isometric to $((-\epsilon,\epsilon)\times \Sigma, dt^2+g_{\Sigma})$.
\end{theorem}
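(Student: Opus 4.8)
The plan is to extract the area inequality from the stability inequality together with the Gauss equation and Gauss--Bonnet, and then, in the equality case, to upgrade the resulting infinitesimal rigidity to a local splitting by building a foliation of a neighborhood of $\Sigma$ by constant mean curvature surfaces and analyzing it through an ordinary differential inequality, mirroring the strategy behind the closed-surface rigidity results quoted above.

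For the inequality: since $\Sigma$ is locally area-minimizing it is two-sided and stable, so testing the second variation of area with $\varphi\equiv 1$ gives $\int_\Sigma\big(\mathrm{Ric}(\nu,\nu)+|A|^2\big)\,dA\le 0$. As $\Sigma$ is minimal, the Gauss equation reads $\mathrm{Ric}(\nu,\nu)+|A|^2=\tfrac12 R-K_\Sigma+\tfrac12|A|^2$, so with $R\ge -2$ and $\int_\Sigma K_\Sigma\,dA=2\pi\chi(\Sigma)=-4\pi(g(\Sigma)-1)$ this rearranges to $\mathrm{Area}(\Sigma)\ge 4\pi(g(\Sigma)-1)$. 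If equality holds, every inequality above must be an equality: $A\equiv 0$, so $\Sigma$ is totally geodesic; $R\equiv -2$ along $\Sigma$; and $\int_\Sigma(\mathrm{Ric}(\nu,\nu)+|A|^2)\,dA=0$. This last equality together with stability forces the first eigenvalue of the stability operator $-\Delta-(\mathrm{Ric}(\nu,\nu)+|A|^2)$ to vanish with $\varphi\equiv 1$ an eigenfunction, so $\mathrm{Ric}(\nu,\nu)+|A|^2\equiv 0$ pointwise on $\Sigma$; substituting back into the Gauss equation yields $K_\Sigma\equiv -1$, the first assertion in the equality case.

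For the splitting I would first produce the foliation. Because $\mathrm{Ric}(\nu,\nu)+|A|^2\equiv 0$ on $\Sigma$, the linearization at $\Sigma$ of the mean curvature operator on normal graphs is $-\Delta$, which becomes an isomorphism after restricting to functions of zero mean; the implicit function theorem then produces, for $|t|$ small, embedded surfaces $\Sigma_t$ that are normal graphs over $\Sigma$ near $\{t\}\times\Sigma$, with $\Sigma_0=\Sigma$, each of constant mean curvature $H(t)$ and positive lapse function $\rho_t$, so $\{\Sigma_t\}$ foliates a neighborhood of $\Sigma$. Put $A(t)=\mathrm{Area}(\Sigma_t)$ and $a(t)=A(t)-4\pi(g(\Sigma)-1)$; since $\Sigma$ is locally area-minimizing and $g(\Sigma_t)=g(\Sigma)$ we have $a\ge 0$, $a(0)=0$, and the first variation gives $a'(t)=H(t)\int_{\Sigma_t}\rho_t\,dA$, whence $a'(0)=0$. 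Dividing the evolution equation $\partial_t H=-\Delta_{\Sigma_t}\rho_t-(|A_t|^2+\mathrm{Ric}(\nu_t,\nu_t))\rho_t$ by $\rho_t$, integrating over $\Sigma_t$, and invoking the Gauss equation, Gauss--Bonnet and $R\ge -2$ once more, yields the key differential inequality
\[
H'(t)\int_{\Sigma_t}\frac{dA}{\rho_t}\ \le\ a(t)-\tfrac12 H(t)^2 A(t)\ \le\ a(t).
\]
Together with $a\ge 0$, $a(0)=a'(0)=0$ and $a'(t)=H(t)\int_{\Sigma_t}\rho_t\,dA$, a Gr\"onwall-type comparison forces $a\equiv 0$ near $t=0$; hence $A(t)$ is constant and $H(t)\equiv 0$, so each leaf $\Sigma_t$ is minimal of area exactly $4\pi(g(\Sigma)-1)$.

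Finally, each $\Sigma_t$ admits the positive Jacobi field $\rho_t$, hence the first eigenvalue of its stability operator is $0$, and re-running the equality analysis above on each leaf (now with $\mathrm{Area}(\Sigma_t)=4\pi(g(\Sigma)-1)$) shows that every $\Sigma_t$ is totally geodesic with $K_{\Sigma_t}\equiv -1$ and $R\equiv -2$ on it, and that $\rho_t$ is constant on $\Sigma_t$. Reparametrizing so that the lapse is identically $1$, the metric near $\Sigma$ takes the form $dt^2+g_t$ on $(-\epsilon,\epsilon)\times\Sigma$; since the leaves are totally geodesic, $\partial_t g_t=0$, so $g_t\equiv g_\Sigma$ and the neighborhood is isometric to $((-\epsilon,\epsilon)\times\Sigma,\,dt^2+g_\Sigma)$ with $g_\Sigma$ of constant Gaussian curvature $-1$. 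I expect the heart of the matter to be this second half: constructing the constant mean curvature foliation and, above all, extracting a differential inequality for $H(t)$ sharp enough that, in tandem with local area-minimality, it pins $H$ to zero; the curvature bookkeeping elsewhere is routine once the Gauss equation is in place.
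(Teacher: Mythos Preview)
Your proposal is correct and follows essentially the same route the paper sketches for this theorem and carries out in detail for its free boundary analogue (Propositions \ref{riginf}, \ref{foliation}, \ref{anarea} and Theorem \ref{mainA}): stability with $\varphi\equiv 1$ plus Gauss--Bonnet for the area bound, infinitesimal rigidity in the equality case, a constant mean curvature foliation via the implicit function theorem, and the integrated evolution equation for $H(t)$ to force the splitting.

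The only point of divergence is the ODE endgame. The paper, following Micallef--Moraru (see case c) of Proposition \ref{anarea}), first proves $H(t)\le 0$ for $t\ge 0$ by a short contradiction using the mean value theorem---\emph{without} invoking area-minimality---thereby obtaining $|\Sigma_t|\le|\Sigma_0|$, and only then uses local area-minimality to pin down equality. You instead feed area-minimality ($a\ge 0$) directly into a Gr\"onwall-type bound (from $H'(t)\varphi(t)\le a(t)$ and $a'(t)=H(t)\xi(t)$ one gets $a(t)\le Ct^2\sup_{[0,t]}a$, forcing $a\equiv 0$ for small $t$). Both arguments are valid; the paper's ordering has the small conceptual bonus of isolating an area-monotonicity statement valid for any infinitesimally rigid $\Sigma_0$, area-minimizing or not.
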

\indent These local splitting theorems also imply interesting global theorems (see \cite{BBN} and \cite{N}). \\
\indent Let us give a sketch of the proof of Theorems \ref{BraBreNev} and \ref{Nun}. In order to prove the inequalities for the area of the
respective $\Sigma$ in the statements above, one can follow Schoen and Yau, using the stability of $\Sigma$, 
the Gauss equation and the Gauss-Bonnet theorem. These inequalities also appeared in the work of Shen and Zhu \cite{SZ}. When the area of $\Sigma$ achieves the equality stated in the
respective theorems, there are more restrictions on the intrinsic and extrinsic geometries of $\Sigma$
(recall Fischer-Colbrie and Schoen remark), which allowed then to construct a foliation of $M$ around
$\Sigma$ by constant mean curvature surfaces (by using the implicit function theorem). The use of foliations by
constant mean curvature surfaces in relation to scalar curvature problems
has already appeared in the work of Huisken and Yau \cite{HY} and Bray \cite{Bra}. After this point, they prove
that the leaves of the foliation have area not greater than that of $\Sigma$. This is achieved by very different
means in \cite{BBN} and \cite{N}. Since $\Sigma$ is area-minimizing, it follows that each leaf is area-minimizing and its
area satisfies the equality stated in the respective theorems, an information that can be used to conclude the local
splitting of $(M,g)$ around $\Sigma$. \\
\indent An interesting unified approach to Theorems \ref{CaiGal}, \ref{BraBreNev} and \ref{Nun} was provided by Micallef and Moraru
\cite{MM}, also based on foliations by constant mean curvature surfaces. In our paper, we prove an analogous local rigidity
theorem for free boundary surfaces, based on their methods. \\
\indent Our setting is the following. Let $(M,g)$ be a Riemannian three-manifold with boundary $\partial M$. Let $R^M$ denote
the scalar curvature of $M$ and $H^{\partial M}$ denote the mean curvature of $\partial M$ (we follow the convention that a unit
sphere in $\mathbb{R}^3$ has mean curvature 2 with respect to the outward normal). Let $\Sigma$ be a
compact, connected surface with boundary $\partial \Sigma$. We say that $\Sigma$ is properly embedded (or immersed) in $M$
if it is embedded (or immersed) in $M$ and $\Sigma\cap\partial M=\partial \Sigma$.
We say that such $\Sigma$ is locally area-minimizing in $M$ if every nearby properly immersed surface has area greater than
or equal to the area of $\Sigma$. The first variation formula for the area (see the Appendix) implies that an area-minimizing properly immersed surface $\Sigma$
is minimal and \textit{free boundary}, i.e., $\Sigma$ meets $\partial M$ orthogonally along $\partial \Sigma$. Furthermore
$\Sigma$ is \textit{free boundary stable}, i.e., the second variation of area is nonnegative
for every variation that preserves the boundary $\partial M$. \\
\indent When $R^M$ and $H^{\partial M}$ are bounded from below, one can consider the following functional in the space of properly
immersed surfaces:
\begin{equation*}
  I(\Sigma)=\frac{1}{2} \inf R^M |\Sigma|+ \inf H^{\partial M} |\partial\Sigma|,
\end{equation*}
\noindent where $|\Sigma|$ denotes the area of $\Sigma$ and $|\partial \Sigma|$ denotes the length of $\partial \Sigma$. \\
\indent The next proposition gives an upper bound to $I(\Sigma)$ when one assumes that $\Sigma$
is a free boundary stable minimal surface:
\begin{proposition} \label{riginf}
  Let $(M,g)$ be a Riemannian three-manifold with boundary $\partial M$. Assume $R^M$ and $H^{\partial M}$ are bounded from below.
  If $\Sigma$ is a properly immersed, two-sided, free boundary stable minimal surface, then
  \begin{equation} \label{stabineq}
    I(\Sigma)\leq 2\pi\chi(\Sigma).
  \end{equation}
  where $\chi(\Sigma)$ is the Euler characteristic of $\Sigma$. Moreover, the equality holds if, and only if, 
  $\Sigma$ satisfies the following properties: \\
  \indent a) $\Sigma$ is totally geodesic in $M$ and $\partial \Sigma$ consists of geodesics of $\partial M$; \\
  \indent b) The scalar curvature $R^M$ is constant along $\Sigma$ and equal to $\inf R^M$, and the mean curvature $H^{\partial M}$ is constant along $\partial \Sigma$
  and equal to $\inf H^{\partial M}$;\\
  \indent c) $Ric(N,N)=0$, and $N$ is in the kernel of the shape operator of $\partial M$ along $\partial \Sigma$, where $N$ is the 
  unit normal vector field of $\Sigma$. \\
  \indent In particular, $a)$, $b)$ and $c)$ imply that $\Sigma$ has constant Gaussian curvature $\inf R^M/2$ and 
  $\partial \Sigma$ has constant geodesic curvature $\inf H^{\partial M}$ in $\Sigma$.
\end{proposition}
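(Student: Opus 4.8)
Proof proposal.

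The plan is to follow the Schoen--Yau scheme, combining the second variation formula for free boundary surfaces, the Gauss equation, and the Gauss--Bonnet theorem with boundary. First I would write down the stability inequality: for every function $\varphi$ on $\Sigma$ that is allowed in the free boundary variational problem (no boundary condition on $\partial\Sigma$, since the variation need not fix $\partial M$), one has
\begin{equation*}
  \int_\Sigma |\nabla\varphi|^2 \,\geq\, \int_\Sigma \bigl(Ric(N,N)+|A|^2\bigr)\varphi^2 + \int_{\partial\Sigma} \mathrm{II}^{\partial M}(N,N)\,\varphi^2,
\end{equation*}
where $A$ is the second fundamental form of $\Sigma$ and the boundary term, coming from integrating by parts in the second variation formula, involves the second fundamental form $\mathrm{II}^{\partial M}$ of $\partial M$ evaluated on the unit normal $N$ of $\Sigma$ (this is where the free boundary condition $\Sigma\perp\partial M$ is used: $N$ is tangent to $\partial M$, so $\mathrm{II}^{\partial M}(N,N)$ makes sense and equals the geodesic curvature of $\partial\Sigma$ in $\partial M$ in the relevant direction). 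I would take the test function $\varphi\equiv 1$.

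Next I would rewrite the curvature terms. The Gauss equation in dimension three gives $Ric(N,N)+|A|^2 = \tfrac12 R^M - K_\Sigma + \tfrac12|A|^2 + \tfrac12 H_\Sigma^2$, and since $\Sigma$ is minimal, $H_\Sigma=0$ and $|A|^2 = -2\det A \geq 0$, so $Ric(N,N)+|A|^2 = \tfrac12 R^M - K_\Sigma + \tfrac12|A|^2$. For the boundary term, since $\Sigma$ meets $\partial M$ orthogonally, the outward conormal $\nu$ of $\partial\Sigma$ in $\Sigma$ is also the outward normal of $\partial M$ restricted along $\partial\Sigma$; decomposing the mean curvature $H^{\partial M} = \mathrm{tr}\,\mathrm{II}^{\partial M}$ along $\partial\Sigma$ into the $N$-direction and the $\partial\Sigma$-tangent direction $T$, and comparing $\mathrm{II}^{\partial M}(T,T)$ with the geodesic curvature $\kappa_g$ of $\partial\Sigma$ inside $\Sigma$ (they agree, again by orthogonality), gives $\mathrm{II}^{\partial M}(N,N) = H^{\partial M} - \kappa_g$. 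Substituting everything into the stability inequality with $\varphi\equiv1$ yields
\begin{equation*}
  0 \,\geq\, \tfrac12\!\int_\Sigma R^M \;-\!\int_\Sigma K_\Sigma \;+\;\tfrac12\!\int_\Sigma |A|^2 \;+\!\int_{\partial\Sigma} H^{\partial M} \;-\!\int_{\partial\Sigma}\kappa_g.
\end{equation*}
Now I apply Gauss--Bonnet, $\int_\Sigma K_\Sigma + \int_{\partial\Sigma}\kappa_g = 2\pi\chi(\Sigma)$, to eliminate $\int_\Sigma K_\Sigma + \int_{\partial\Sigma}\kappa_g$; rearranging gives
\begin{equation*}
  \tfrac12\!\int_\Sigma R^M + \int_{\partial\Sigma} H^{\partial M} + \tfrac12\!\int_\Sigma |A|^2 \,\leq\, 2\pi\chi(\Sigma).
\end{equation*}
Dropping the nonnegative term $\tfrac12\int_\Sigma|A|^2$ and bounding $R^M \geq \inf R^M$ and $H^{\partial M}\geq\inf H^{\partial M}$ produces exactly $I(\Sigma) = \tfrac12\inf R^M\,|\Sigma| + \inf H^{\partial M}\,|\partial\Sigma| \leq 2\pi\chi(\Sigma)$, which is \eqref{stabineq}.

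For the equality case, equality in the chain forces simultaneously: $|A|^2\equiv 0$ on $\Sigma$, i.e.\ $\Sigma$ is totally geodesic, whence (a); $R^M \equiv \inf R^M$ on $\Sigma$ and $H^{\partial M}\equiv\inf H^{\partial M}$ on $\partial\Sigma$, which is (b); and, crucially, equality in the stability inequality with the \emph{constant} test function, which forces $\varphi\equiv1$ to be a first eigenfunction, hence $Ric(N,N)+|A|^2 = 0$ on $\Sigma$ and $\mathrm{II}^{\partial M}(N,N) = 0$ on $\partial\Sigma$; since $|A|^2=0$ this gives $Ric(N,N)=0$ and $N\in\ker\mathrm{II}^{\partial M}$, i.e.\ (c). Conversely, if (a), (b), (c) hold one runs the computation backwards: every inequality above is an equality, so equality holds in \eqref{stabineq}. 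Finally, from (a) the Gauss equation gives $K_\Sigma = \tfrac12 R^M - Ric(N,N) - \tfrac12|A|^2 = \tfrac12\inf R^M$ by (b) and (c), and $\mathrm{II}^{\partial M}(N,N)=0$ together with $\mathrm{II}^{\partial M} = H^{\partial M} - \kappa_g$ in the $N$-direction gives $\kappa_g = H^{\partial M} = \inf H^{\partial M}$ along $\partial\Sigma$, proving the last assertion.

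The routine parts are the two curvature identities; the main point requiring care is getting the correct boundary term in the free boundary second variation formula and identifying $\mathrm{II}^{\partial M}(N,N)$ with $H^{\partial M}-\kappa_g$ via the orthogonality of $\Sigma$ and $\partial M$ — in particular making sure the sign conventions for $H^{\partial M}$ (outward normal, unit sphere has $H=2$) and for $\kappa_g$ are consistent so that the Gauss--Bonnet cancellation goes through cleanly. I expect this bookkeeping, rather than any hard analysis, to be the crux; everything downstream is algebra.
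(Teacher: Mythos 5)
Your approach is the same as the paper's: plug $\varphi\equiv1$ into the free boundary second variation form, trade $Ric(N,N)+|A|^2$ for curvature terms via the Gauss equation, use the free boundary identity $H^{\partial M}=\kappa_g+\mathrm{II}^{\partial M}(N,N)$, and invoke Gauss--Bonnet. The inequality and the derivation of (b) and the totally geodesic part of (a) are fine, and the bookkeeping you worried about is in order.

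There is a small but genuine gap in the equality case. From the Euler--Lagrange condition $Q(1,\varphi)=0$ for all $\varphi$ you correctly get the pointwise statements $Ric(N,N)+|A|^2=0$ on $\Sigma$ and $\mathrm{II}^{\partial M}(N,N)=0$ on $\partial\Sigma$. But $\mathrm{II}^{\partial M}(N,N)=0$ alone does \emph{not} give $N\in\ker\mathrm{II}^{\partial M}$: a symmetric form can vanish on the diagonal at a vector without that vector being in the kernel; you also need $\mathrm{II}^{\partial M}(T,N)=0$. Likewise, item (a) has a second clause --- that $\partial\Sigma$ consists of geodesics of $\partial M$ --- which you did not derive. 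Both facts do follow, but only after using that $\Sigma$ is totally geodesic: since $A=0$, the vectors $\nabla_T T$ and $\nabla_T X=\nabla_T\nu$ are tangent to $\Sigma$, hence $g(N,\nabla_T T)=0$ (the geodesic curvature of $\partial\Sigma$ in $\partial M$ vanishes) and $g(N,\nabla_T X)=\mathrm{II}^{\partial M}(T,N)=0$ (so $N$, and $T$, diagonalize the shape operator of $\partial M$ along $\partial\Sigma$). This extra observation is exactly what the paper supplies at that step, and your proof should include it before asserting (a) and (c) in full.
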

\indent Inequality (\ref{stabineq}) relates the scalar curvature of $M$, the mean curvature of $\partial M$ and the topology
of the free boundary stable $\Sigma$, as in Schoen and Yau's Theorem \ref{SchYau1}. This connection has also been studied by Chen,
Fraser and Pang \cite{CFP}. \\
\indent For further reference, we will call \textit{infinitesimally rigid} any properly embedded, two-sided,
free boundary surface $\Sigma$ in $M$ that satisfies properties $a)$, $b)$ and $c$). \\
\indent It is interesting to have in mind the following model situation. In Riemannian three-manifolds of the form
($\mathbb{R}\times \Sigma ,dt^2+g_0$), where $(\Sigma,g_0)$ is a compact Riemannian surface
with constant Gaussian curvature whose boundary has constant geodesic curvature, all the slices $\{t\}\times\Sigma$ satisfy the
hypotheses of Proposition \ref{riginf} and are infinitesimally rigid. They also have two additional properties: they are
in fact area-minimizing and each connected component of their boundary has the shortest possible length in its homotopy class inside 
the boundary of $\mathbb{R}\times\Sigma$. \\
\indent Given an infinitesimally rigid surface $\Sigma_0$, we construct a foliation $\{\Sigma_{t}\}_{t\in I}$
around $\Sigma_{0}$ by constant mean curvature free boundary surfaces and then analyze the behavior of the area of the
surfaces $\Sigma_t$ following the unified approach of \cite{MM}. When $\inf H^{\partial M}>0$
and each component of $\partial \Sigma$ is locally length-minimizing, or when $\inf H^{\partial M}=0$,
we prove that $|\Sigma_0|\geq|\Sigma_t|$ for every $t\in I$ (maybe for some smaller interval $I$). 
As a consequence, we obtain a local rigidity theorem for area-minimizing free boundary surfaces
in Riemannian three-manifolds with mean convex boundary (i.e., $H^{\partial M}\geq 0$):
\begin{theorem} \label{mainA}
  Let $(M, g)$ be a Riemannian three-manifold with mean convex boundary. Assume that $R^M$ is bounded from below. \\
  \indent Let $\Sigma$ be a properly embedded, two-sided, locally area-minimizing free boundary surface such that $I(\Sigma)=2\pi\chi(\Sigma)$. Assume
  that one of the following hypotheses holds: \\
  \indent i) each component of $\partial \Sigma$ is locally length-minimizing in $\partial M$; or \\
  \indent ii) $\inf H^{\partial M}=0$. \\
  \indent Then there exists a neighborhood of $\Sigma$ in $(M,g)$ that is isometric to
  $((-\epsilon,\epsilon)\times \Sigma, dt^2+g_{\Sigma})$, where $(\Sigma,g_{\Sigma})$ has cons\-tant Gaussian curvature 
  $\frac{1}{2}\inf R^{M}$ and $\partial \Sigma$ has constant geodesic curvature $\inf H^{\partial M}$ in $\Sigma$.
\end{theorem}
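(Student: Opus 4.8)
The plan is to follow the strategy outlined in the introduction: deduce from $I(\Sigma) = 2\pi\chi(\Sigma)$ that $\Sigma$ is infinitesimally rigid via Proposition \ref{riginf}, construct a constant mean curvature free boundary foliation $\{\Sigma_t\}_{t \in (-\epsilon,\epsilon)}$ near $\Sigma_0 = \Sigma$, show that each leaf is itself area-minimizing (hence infinitesimally rigid), and finally promote this infinitesimal information to an honest metric splitting. First I would establish the existence of the foliation. Using that $\Sigma$ is free boundary and infinitesimally rigid, I would set up the normal exponential-type map and apply the implicit function theorem in a suitable Hölder or Sobolev space of functions on $\Sigma$ satisfying the Neumann-type (free boundary) condition along $\partial\Sigma$: for each small $t$ one finds a function $u_t$ on $\Sigma$, with $u_0 = 0$, whose graph $\Sigma_t$ over $\Sigma$ in the normal direction has constant mean curvature $\lambda(t)$ and meets $\partial M$ orthogonally. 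The linearization is the free boundary Jacobi operator, whose kernel (by the equality case, properties a)--c), which force the stability operator to be essentially the Laplacian) consists only of constants, which is precisely what makes the IFT applicable after projecting off constants; one also records $\lambda(0) = 0$ and the first-order behavior of the lapse function $\partial_t u_t|_{t=0}$, which can be normalized to be a positive constant.

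Next I would analyze the area function $t \mapsto |\Sigma_t|$ and the auxiliary quantity built from $I$. The key computation, following Micallef–Moraru, is to evaluate the first and second derivatives of $|\Sigma_t|$ and of $|\partial\Sigma_t|$ at $t = 0$ using the first variation of area (bringing in the mean curvature $\lambda(t)$ and the geodesic curvature of $\partial\Sigma_t$ in $\Sigma_t$) and the second variation together with the Gauss equation, Gauss–Bonnet on $\Sigma_t$, and the hypotheses $R^M \geq \inf R^M$, $H^{\partial M} \geq \inf H^{\partial M} \geq 0$. In case (i) the local length-minimizing hypothesis on each component of $\partial\Sigma$ is used to control $|\partial\Sigma_t|$ from below, while in case (ii) the term $\inf H^{\partial M}|\partial\Sigma_t|$ simply drops out. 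The upshot should be that $|\Sigma_t| \leq |\Sigma_0|$ for all small $t$: either $|\Sigma_t|$ has a strict local max at $t=0$, or one argues via the sign of $\lambda(t)$ (the leaves on one side have $\lambda \geq 0$, on the other $\lambda \leq 0$, and a Jacobi-field/Sturm-type argument controls this) that the area cannot exceed $|\Sigma_0|$.

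Once $|\Sigma_t| \le |\Sigma_0|$ is known, the area-minimizing property of $\Sigma_0$ forces $|\Sigma_t| = |\Sigma_0|$ for all $t$, so every leaf $\Sigma_t$ is locally area-minimizing and satisfies $I(\Sigma_t) = 2\pi\chi(\Sigma_t) = 2\pi\chi(\Sigma)$; by Proposition \ref{riginf} each $\Sigma_t$ is infinitesimally rigid. In particular each $\Sigma_t$ is totally geodesic with $\mathrm{Ric}(N_t,N_t)=0$, and $\lambda(t) \equiv 0$. I would then show the lapse function is constant in space on each leaf (it solves the Jacobi equation, which on an infinitesimally rigid leaf reduces to $\Delta u = 0$ with Neumann condition $\partial u/\partial\nu = 0$, hence $u$ is constant); after reparametrizing $t$ by arclength along a normal geodesic, the flow is by unit-speed normal geodesics and the normal bundle metric is $dt^2$, while totally geodesic leaves mean the induced metric on each $\Sigma_t$ is independent of $t$. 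This yields the product structure $((-\epsilon,\epsilon)\times\Sigma, dt^2 + g_\Sigma)$, and property b) of Proposition \ref{riginf} identifies the curvatures of $g_\Sigma$. The main obstacle I anticipate is the area comparison step: making the second-variation/Gauss–Bonnet computation genuinely produce $|\Sigma_t| \le |\Sigma_0|$ in the free boundary setting, since the boundary integrals (geodesic curvature of $\partial\Sigma_t$, second fundamental form of $\partial M$) must be handled carefully, and in case (i) the local length-minimality of $\partial\Sigma$ has to be converted into a usable differential inequality for $|\partial\Sigma_t|$; this is where the free boundary problem genuinely departs from the closed case of \cite{MM}.
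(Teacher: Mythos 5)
Your overall strategy coincides with the paper's: deduce infinitesimal rigidity from equality in (\ref{stabineq}), build a CMC free boundary foliation $\{\Sigma_t\}$ via the implicit function theorem (Proposition \ref{foliation}), show $|\Sigma_t|\le|\Sigma_0|$ so that each leaf is area-minimizing and hence infinitesimally rigid, and then read off the product structure from the constancy of the lapse function and the resulting parallel unit normal field.

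The one place where the proposal is genuinely incomplete is precisely the step you flag as the main obstacle: establishing $|\Sigma_t| \le |\Sigma_0|$. You gesture at showing that the mean curvature $\lambda(t)$ has the correct sign on each side of $t=0$ via ``a Jacobi-field/Sturm-type argument,'' but this is the crux of the whole proof and a Sturm comparison is not how it goes. The paper's Proposition \ref{anarea} derives, via the evolution equations for the lapse function together with Gauss, Gauss--Bonnet, equation (\ref{Hek}) and the length/mean-convexity hypotheses, the integral inequality
\[
H'(t)\int_{\Sigma}\frac{1}{\rho_t}\,dA_t \;\le\; -\frac{1}{2}\inf R^M \int_0^t H(s)\Bigl(\int_{\Sigma}\rho_s\,dA_s\Bigr)ds,
\]
and then proves $H(t)\le 0$ for $t\ge 0$ by a Gronwall-type / mean-value-theorem contradiction argument that must be split into three cases according to the sign of $\inf R^M$ (the case $\inf R^M<0$ is not symmetric with the case $\inf R^M>0$ and needs its own choice of $\epsilon$). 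This case analysis is unavoidable because the differential inequality changes character with the sign of $\inf R^M$; without it the sign of $H(t)$ is not controlled. Also note that before this inequality can even be written one must first verify that the hypotheses (i) or (ii) let you drop the boundary-length term $\inf H^{\partial M}(|\partial\Sigma_0|-|\partial\Sigma_t|)$; this is where local length-minimality of $\partial\Sigma_0$, or $\inf H^{\partial M}=0$, is used, and it is a one-line observation once $I(\Sigma_0)-I(\Sigma_t)$ has been isolated, rather than a separate differential inequality for $|\partial\Sigma_t|$ as your sketch suggests.
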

\indent We use this local result to prove some global rigidity theorems. \\
\indent Let ${\mathcal{F}_M}$ be the set of all immersed disks in $M$ whose boundaries are curves in $\partial M$ that 
are homotopically non-trivial in $\partial M$. If ${\mathcal{F}_M}$ is non-empty, we define
\begin{eqnarray*}
{\mathcal{A}}(M,g)=\inf_{\Sigma\in {\mathcal{F}_M}} |\Sigma| & \text{and}  &  {\mathcal{L}}(M,g)=\inf_{\Sigma\in {\mathcal{F}_M}} |\partial \Sigma|. 
\end{eqnarray*}
\indent Our first global rigidity theorem involves a combination of these geometric invariants.
\begin{theorem} \label{mainB}
  Let $(M,g)$ be a compact Riemannian three-manifold with mean convex boun\-da\-ry. Assume that
  ${\mathcal{F}}_M$ is non-empty. Then
  \begin{equation} \label{firstinvineq}
    \frac{1}{2}\inf R^M {\mathcal{A}}(M,g) + \inf H^{\partial M} {\mathcal{L}}(M,g) \leq 2\pi.
  \end{equation}
  \noindent Moreover, if equality holds, then the universal covering of $(M,g)$ is isometric to 
  $(\mathbb{R}\times \Sigma_0, dt^2+g_{0})$, where $(\Sigma_0,g_{0})$ is a disk with cons\-tant Gaussian curvature 
  $\inf R^{M}/2$ and $\partial \Sigma_0$ has constant geodesic curvature $\inf H^{\partial M}$ in $(\Sigma_0,g_{0})$.  
\end{theorem}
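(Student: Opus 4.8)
The plan is to deduce Theorem~\ref{mainB} from the local splitting result of Theorem~\ref{mainA} by first producing an infinitesimally rigid minimizer when equality holds, and then propagating the splitting globally via a continuation argument on the universal cover. I begin by proving the inequality \eqref{firstinvineq}: since $\mathcal{F}_M$ is non-empty and $M$ is compact, a standard minimization procedure (solving a Plateau-type problem among disks whose boundary curves represent non-trivial free homotopy classes in $\partial M$, using that $M$ is compact with mean convex boundary to get compactness and to prevent the boundary from becoming trivial) yields a properly embedded, two-sided, area-minimizing free boundary disk $\Sigma$ realizing (or approximating) $\mathcal{A}(M,g)$; applying Proposition~\ref{riginf} to this stable free boundary disk and using $\chi(\Sigma)=1$ gives $\tfrac12\inf R^M|\Sigma| + \inf H^{\partial M}|\partial\Sigma| = I(\Sigma)\le 2\pi$. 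Because $|\Sigma|\ge\mathcal{A}(M,g)$, $|\partial\Sigma|\ge\mathcal{L}(M,g)$ and the infima $\inf R^M$, $\inf H^{\partial M}$ may be negative, a small amount of care is needed to compare $I(\Sigma)$ with the left-hand side of \eqref{firstinvineq}; the cleanest route is to choose the minimizer to realize $\mathcal{A}(M,g)$ exactly and to observe that among area-minimizers the boundary length is then controlled, or alternatively to work directly with a minimizing sequence and pass to the limit. In either case \eqref{firstinvineq} follows.

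Next, suppose equality holds in \eqref{firstinvineq}. Then the minimizing disk $\Sigma$ above must satisfy $I(\Sigma)=2\pi\chi(\Sigma)=2\pi$, so by the equality case of Proposition~\ref{riginf} it is infinitesimally rigid, and moreover $\inf R^M|\Sigma| = \inf R^M\,\mathcal{A}(M,g)$ and $\inf H^{\partial M}|\partial\Sigma| = \inf H^{\partial M}\,\mathcal{L}(M,g)$, so $\Sigma$ simultaneously minimizes area in $\mathcal{F}_M$ and has boundary of minimal length. In particular each component of $\partial\Sigma$ is length-minimizing in $\partial M$ in its homotopy class, so hypothesis (i) of Theorem~\ref{mainA} holds (if $\inf H^{\partial M}=0$ one instead invokes hypothesis (ii), so the argument works regardless of sign). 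Theorem~\ref{mainA} then provides $\epsilon>0$ and an isometry between a neighborhood of $\Sigma$ and $((-\epsilon,\epsilon)\times\Sigma, dt^2+g_\Sigma)$ with $(\Sigma,g_\Sigma)$ of constant Gaussian curvature $\tfrac12\inf R^M$ and $\partial\Sigma$ of constant geodesic curvature $\inf H^{\partial M}$.

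To globalize, I pass to the universal cover $(\widetilde M,\tilde g)$ and lift $\Sigma$ to a totally geodesic, free boundary surface $\widetilde\Sigma_0$ (the lift is still a disk and still infinitesimally rigid; one should check that $\widetilde\Sigma_0$ remains properly embedded and separating, which follows from the product structure near it and simple connectivity of $\widetilde M$). The slices $\{t\}\times\widetilde\Sigma_0$ of the local product are themselves infinitesimally rigid free boundary disks with the same value of $I$, hence also satisfy the equality hypothesis; since on the cover one still has the relevant minimizing property for these slices, Theorem~\ref{mainA} applies to each of them, so the set of $t$ for which a product neighborhood of width reaching $t$ exists is open. It is also closed by a limiting argument (the second fundamental forms and the ambient Ricci term vanish identically on the region already covered, so the metric stays a product up to the limiting slice, which is again totally geodesic and infinitesimally rigid). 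Hence the product structure extends to all of $\mathbb{R}\times\widetilde\Sigma_0$, and a standard argument shows this exhausts $\widetilde M$, giving the isometry $\widetilde M\cong(\mathbb{R}\times\Sigma_0, dt^2+g_0)$ claimed. The main obstacle I anticipate is the continuation step: one must ensure that the area-minimizing (or length-minimizing) property needed to reapply Theorem~\ref{mainA} is preserved for the translated slices on the universal cover — i.e. that lifting does not destroy minimality and that the foliation leaves inherit it — and that openness and closedness of the good set of parameters are genuinely both available; this is exactly where the hypotheses $H^{\partial M}\ge 0$ together with either the length-minimizing condition or $\inf H^{\partial M}=0$ are used, mirroring the corresponding step in \cite{BBN}, \cite{N} and \cite{MM} for the closed case.
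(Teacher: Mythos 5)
Your proposal follows the same route as the paper: Meeks--Yau provides a properly embedded free boundary minimal disk $\Sigma_0$ with $|\Sigma_0|=\mathcal{A}(M,g)$, Proposition~\ref{riginf} gives the inequality, and in the equality case Theorem~\ref{mainA} plus a continuation argument yields the global product. Two small remarks. First, the ``care'' you anticipate in comparing $I(\Sigma_0)$ with the left-hand side of \eqref{firstinvineq} dissolves once you use the hypotheses as stated: Meeks--Yau gives an \emph{exact} area minimizer (so $|\Sigma_0|=\mathcal{A}(M,g)$, not merely $\ge$), and mean convexity forces $\inf H^{\partial M}\ge 0$ (your worry that this infimum ``may be negative'' contradicts the standing assumption $H^{\partial M}\ge 0$); hence $\inf H^{\partial M}|\partial\Sigma_0|\ge\inf H^{\partial M}\mathcal{L}(M,g)$ and the chain $\tfrac12\inf R^M\mathcal{A}+\inf H^{\partial M}\mathcal{L}\le I(\Sigma_0)\le 2\pi$ is immediate, with no need to control boundary lengths of area-minimizers or to argue via a minimizing sequence. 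Second, the paper globalizes by running the open--closed argument for the map $\Psi(s,x)=\exp_x(sN_0(x))$ directly in $M$ and then observing that the resulting local isometry $\Psi:\mathbb{R}\times\Sigma_0\to M$ is a covering map; this sidesteps the subtlety you flag about whether area-minimality survives the lift to the universal cover. Your universal-cover version can be made to work, but the paper's formulation is the more economical one because it never needs the lifted slices to be minimizers.
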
 
\indent The case $\inf R^M=0$ and $\inf H^{\partial M}>0$, which includes in particular mean convex domains of the Euclidean space,
was treated by M. Li (see his preprint \cite{L}). His approach is similar to the one in \cite{BBN}. \\
\indent Our proof relies on the fact that ${\mathcal{A}}(M,g)$ can be realized as the area of a properly embedded free boundary minimal disk
$\Sigma_0$, by a classical result of Meeks and Yau \cite{MY1}. Since $H^{\partial M}\geq 0$, we can compare the invariant and $I(\Sigma_0)$, and hence inequality
(\ref{firstinvineq}) follows from Proposition \ref{riginf}.
When equality holds, $\Sigma_0$ must be infinitesimally rigid,
and then we use the local splitting around $\Sigma_0$ given by Theorem \ref{mainA}
and a standard continuation argument to obtain the global splitting of the universal covering. \\
\indent When $\inf R^{M}$ is negative, we also prove a rigidity theorem for solutions of the Plateau problem, which is
an immediate consequence of Theorem \ref{mainB}. \\
\indent As before, assume that $(M,g)$ is a compact Riemannian three-manifold with mean convex boundary. Another 
classical result of Meeks and Yau \cite{MY2} says that the Plateau problem has a properly embedded solution in $M$
for any given closed embedded curve in $\partial M$ that bounds a disk. \\
\indent In particular, by considering solutions of the Plateau problem for homotopically non-trivial curves in $\partial M$
that bound disks and have the shortest possible length among such curves, we prove the following
\begin{theorem} \label{mainC}
  Let $(M,g)$ be a compact Riemannian three-manifold with mean convex boun\-da\-ry such that $\inf R^{M} = -2$.
  Assume that ${\mathcal{F}}_M$ is non-empty. \\
  \indent If $\hat{\Sigma}$ is a solution to the Plateau problem for a homotopically non-trivial embedded curve in $\partial M$
  that bounds a disk and has length $\mathcal{L}(M,g)$, then
  \begin{equation}
    |\hat\Sigma| \geq \inf H^{\partial M} \mathcal{L}(M,g) - 2\pi.
  \end{equation}
  \noindent Moreover, if equality holds in $(3)$ for some $\hat{\Sigma}$, then the universal covering of $(M,g)$ is iso\-metric to 
  $(\mathbb{R}\times \Sigma_0, dt^2+g_{0})$, where $(\Sigma_0,g_{0})$ is a disk with cons\-tant Gaussian curvature 
  $-1$ and $\partial \Sigma_0$ has constant geodesic curvature $\inf H^{\partial M}$ in $\Sigma_0$.  
\end{theorem}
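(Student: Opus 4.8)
The plan is to obtain Theorem \ref{mainC} as a direct corollary of Theorem \ref{mainB}: once that global inequality and its rigidity statement are available, no new analysis is needed, only a rearrangement of (\ref{firstinvineq}) in the case $\inf R^{M}=-2$ together with the observation that the Plateau solution $\hat\Sigma$ lies in the family $\mathcal{F}_M$. So the first thing I would check is precisely that membership: by hypothesis $\partial\hat\Sigma$ is an embedded curve in $\partial M$ which is homotopically non-trivial in $\partial M$ and bounds a disk, namely $\hat\Sigma$ itself; hence $\hat\Sigma\in\mathcal{F}_M$ and therefore $|\hat\Sigma|\geq\mathcal{A}(M,g)$. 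I would also record at this point that the assumption $|\partial\hat\Sigma|=\mathcal{L}(M,g)$ says exactly that $\partial\hat\Sigma$ realizes the infimum defining $\mathcal{L}(M,g)$.

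Next I would invoke Theorem \ref{mainB}. Since $(M,g)$ is compact with mean convex boundary and $\mathcal{F}_M\neq\emptyset$, inequality (\ref{firstinvineq}) holds, and substituting $\inf R^{M}=-2$ it reads
\[
-\mathcal{A}(M,g)+\inf H^{\partial M}\,\mathcal{L}(M,g)\leq 2\pi,
\]
that is, $\mathcal{A}(M,g)\geq \inf H^{\partial M}\,\mathcal{L}(M,g)-2\pi$. Combining this with $|\hat\Sigma|\geq\mathcal{A}(M,g)$ gives $|\hat\Sigma|\geq \inf H^{\partial M}\,\mathcal{L}(M,g)-2\pi$, which is the asserted inequality $(3)$.

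For the equality case I would argue as follows: if $|\hat\Sigma|=\inf H^{\partial M}\,\mathcal{L}(M,g)-2\pi$, then the chain $|\hat\Sigma|\geq\mathcal{A}(M,g)\geq \inf H^{\partial M}\,\mathcal{L}(M,g)-2\pi$ forces both inequalities to be equalities; in particular $\mathcal{A}(M,g)=\inf H^{\partial M}\,\mathcal{L}(M,g)-2\pi$, which, using $\inf R^{M}=-2$ once more, is exactly equality in (\ref{firstinvineq}). The rigidity part of Theorem \ref{mainB} then yields that the universal covering of $(M,g)$ is isometric to $(\mathbb{R}\times\Sigma_0,\,dt^2+g_0)$, where $(\Sigma_0,g_0)$ is a disk of constant Gaussian curvature $\inf R^{M}/2=-1$ and $\partial\Sigma_0$ has constant geodesic curvature $\inf H^{\partial M}$ in $\Sigma_0$, as claimed. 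Because everything reduces to Theorem \ref{mainB}, there is no genuine obstacle here; the only point deserving a line of care is the bookkeeping of the two invariants $\mathcal{A}(M,g)$ and $\mathcal{L}(M,g)$, namely that $\mathcal{L}(M,g)$ is exactly the shortest length of a homotopically non-trivial disk-bounding curve in $\partial M$ and that a Plateau solution for such a shortest curve automatically belongs to $\mathcal{F}_M$. One may also note, as a byproduct, that in the equality case $\hat\Sigma$ is itself area-minimizing in $\mathcal{F}_M$ and lifts to one of the totally geodesic slices $\{t\}\times\Sigma_0$ of the splitting.
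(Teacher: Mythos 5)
Your argument is correct and is essentially the same deduction the paper gives: you reduce Theorem~\ref{mainC} to Theorem~\ref{mainB} by noting that $\hat\Sigma\in\mathcal{F}_M$, hence $|\hat\Sigma|\geq\mathcal{A}(M,g)$ and $|\partial\hat\Sigma|=\mathcal{L}(M,g)$, and then rearranging inequality~(\ref{firstinvineq}) with $\inf R^M=-2$; the paper phrases the same chain of inequalities through $I(\hat\Sigma)\leq 2\pi$, which with $\inf R^M<0$ is exactly the comparison $|\hat\Sigma|\geq\mathcal{A}(M,g)$ that you use. The equality analysis (forcing equality in~(\ref{firstinvineq}) and invoking the rigidity part of Theorem~\ref{mainB}) also matches the paper.
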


\medskip

\indent \textit{Acknowledgments.} I am grateful to my Ph.D advisor at IMPA, Fernando Cod\'a Marques, for his constant advice
and encouragement. I also thank Ivaldo Nunes for enlightening discussions about free boundary surfaces. Finally, I am grateful 
to the hospitality of the Institut Henri Poincar\'e, where the first drafts of this work were written 
in October/November 2012. I was supported by CNPq-Brazil and FAPERJ.

\section{Infinitesimal rigidity}

\indent Inequality (\ref{stabineq}) follows from the second variation formula of area for free boundary minimal surfaces,
  the Gauss equation and the Gauss-Bonnet theorem.
  \begin{proof}[Proof of Proposition \ref{riginf}]
    Let $\Sigma$ be a properly immersed, two-sided, free boundary stable minimal surface. Since $\Sigma$ is two-sided,
    there exists a unit vector field $N$ along $\Sigma$ that is normal to $\Sigma$. Let $X$ be the unit vector field on $\partial M$
    that is normal to $\partial M$ and points outside $M$. Since $\Sigma$ is free boundary, the unit conormal $\nu$ of $\partial \Sigma$
    that points outside $\Sigma$ coincides with $X$ along $\partial \Sigma$. \\
    \indent Recall that $H^{\partial M}$ is the trace of the shape operator $\nabla X$, under our convention.
    The free boundary hypothesis implies that $k$, the geodesic
    curvature of $\partial\Sigma$ in $\Sigma$, can be computed as $k=g(T,\nabla_{T}\nu)=g(T,\nabla_{T}X)$, where $T$ is a
    unit vector field tangent to $\partial \Sigma$. In particular, 
    \begin{equation} \label{Hek}
      H^{\partial M}= k + g(N,\nabla_{N}X).
    \end{equation}
    \indent The free boundary stability hypothesis means that, for every $\phi\in C^{\infty}(\Sigma)$,
    \begin{equation*}
      Q(\phi,\phi)= \int_\Sigma |\nabla \phi|^2 - (Ric(N,N) + |B|^2)\phi^2 dA - \int_{\partial\Sigma} g(N,\nabla_{N}X)\phi^2 dL\geq 0,
    \end{equation*}
    \noindent where $B$ denotes the second fundamental form of $\Sigma$. $Q(\phi,\phi)$ is the second variation of area 
    for variations with variational vector field $\phi N$ along $\Sigma$ (for the general second variation formula,
    see \cite{S}). \\
    \indent By evaluating $Q$ on the constant function $1$, we have the inequalities
    \begin{eqnarray*}
      0 & \geq & \int_\Sigma (Ric(N,N) + |B|^2)dA + \int_{\partial\Sigma} g(N,\nabla_{N}X)dL \\
	&   =  & \frac{1}{2} \int_\Sigma (R^M + H^2 + |B|^2)dA - \int_\Sigma K dA - \int_{\partial \Sigma} k dL + \int_{\partial \Sigma} H^{\partial M} dL \\
	& \geq & \frac{1}{2} \inf R^M |\Sigma| + \inf H^{\partial M}|\partial\Sigma| -2\pi\chi(\Sigma).
    \end{eqnarray*}
    \noindent where we used the Gauss equation, equation (\ref{Hek}) and the Gauss-Bonnet theorem. This proves inequality
    (\ref{stabineq}). \\
    \indent When the equality holds in (\ref{stabineq}), every inequality above is in fact an equality. One immediately sees
    that $\Sigma$ must be totally geodesic, $b)$ holds and $Q(1,1)=0$. By elementary considerations about bilinear forms, $Q(1,1)=0$ and
    $Q(\phi,\phi)\geq 0$ for every $\phi\in C^{\infty}(\Sigma)$ implies $Q(1,\phi)=0$ for every $\phi\in$ $C^{\infty}(\Sigma)$.
    Hence, by choosing appropriately the arbitrary test function $\phi$, we conclude that $Ric(N,N)=0$ and $g(N,\nabla_{N}X)=0$. \\
    \indent Since $\Sigma$ is totally geodesic, $\nabla_T T$ and $\nabla_{T} X=\nabla_{T}\nu$ are tangent to $\Sigma$.
    Hence, the geodesic curvature of $\partial \Sigma$ in $\partial M$ given by $g(N,\nabla_{T} T)$ vanishes, and since
    $\nabla_{T} X$ is also orthogonal to $X$ we conclude that $\nabla_{T} X$ is proportional to $T$, which means that $T$ and
    therefore $N$ are eigenvectors of $\nabla X$ on $\partial \Sigma$.
    The second parts of $a)$ and $c)$ follows. \\
    \indent The final statement is just a consequence of the Gauss equation and equation (\ref{Hek}). The converse is immediate from
    the Gauss-Bonnet theorem.
  \end{proof}

\section{Construction of the foliation}

\indent Given a properly embedded infinitesimally rigid surface $\Sigma$ in $M$, there are smooth vector fields $Z$ on $M$ such that
$Z(p)=N(p)$ $\forall p\in \Sigma$ and $Z(p)\in T_{p}\partial M$ $\forall p\in \partial M$. We fix 
$\phi=\phi(x,t)$ the flow of one of these vector fields and $\alpha$ a real number between zero and one. \\
\indent The next proposition gives a family of constant mean curvature free boundary surfaces around an infinitesimally rigid surface.
\begin{proposition} \label{foliation}
  Let $(M,g)$ be a Riemannian three-manifold with boundary $\partial M$. Assume $R^M$ and $H^{\partial M}$ are bounded from below.
  Let $\Sigma$ be a properly embedded, two-sided, free boundary surface. \\
  \indent If $\Sigma$ is infinitesimally rigid, then there exists $\epsilon >0$ and a function
  $w: \Sigma \times (-\epsilon,\epsilon)\rightarrow \mathbb{R}$ such that, for every  $t\in(-\epsilon,\epsilon)$,
  the set
  \begin{equation*}
  \Sigma_t = \{\phi(x,w(x,t));\, x\in\Sigma\}
  \end{equation*}
  \noindent is a free boundary surface with constant mean curvature $H(t)$. Moreover, for every $x\in\Sigma$ and every $t\in(-\epsilon,\epsilon)$,
  \begin{equation*}
    w(x,0)=0, \quad  \int_{\Sigma} \left( w(x,t)-t \right) dA = 0 \quad \text{and} \quad \frac{\partial}{\partial t}w(x,t)\Big|_{t=0} = 1.
  \end{equation*}
  \indent In particular, for some smaller $\epsilon$, $\{\Sigma_t\}_{t\in(-\epsilon,\epsilon)}$ is a foliation of a neighborhood
  of $\Sigma_0=\Sigma$ in $M$.
\end{proposition}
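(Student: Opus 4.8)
\medskip

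The plan is to set up a map whose zero set, via the implicit function theorem, produces the desired function $w$. For $u \in C^{2,\alpha}(\Sigma)$ small, write $\Sigma_u = \{\phi(x,u(x)) : x \in \Sigma\}$; this is a properly embedded surface with boundary on $\partial M$ because $Z$ is tangent to $\partial M$. Let $H(u)$ denote its mean curvature (as a function on $\Sigma$, computed with respect to the normal close to $N$) and let $\theta(u)$ denote the angle function measuring the failure of $\Sigma_u$ to meet $\partial M$ orthogonally along $\partial \Sigma_u$. I would consider the operator
\begin{equation*}
\Phi : C^{2,\alpha}(\Sigma) \times \mathbb{R} \longrightarrow C^{0,\alpha}(\Sigma) \times C^{1,\alpha}(\partial \Sigma) \times \mathbb{R}, \qquad \Phi(u,t) = \left( H(u) - \frac{1}{|\Sigma|}\int_\Sigma H(u)\, dA,\ \cos\theta(u),\ \int_\Sigma (u - t)\, dA \right).
\end{equation*}
The first component records that the mean curvature is constant (we subtract its average), the second that the free boundary condition holds, and the third is the normalization that fixes the "height" of $\Sigma_u$ so as to kill the translational degeneracy. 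Note $\Phi(0,0) = (0,0,0)$ since $\Sigma = \Sigma_0$ is a free boundary minimal surface.

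\medskip

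The key step is to compute the linearization $D_u\Phi(0,0)$ and show it is an isomorphism onto its target. The derivative of $H(u)$ at $u = 0$ in the direction $\varphi$ is the Jacobi operator $L\varphi = \Delta_\Sigma \varphi + (Ric(N,N) + |B|^2)\varphi$, and since $\Sigma$ is infinitesimally rigid we have $Ric(N,N) = 0$ and $B \equiv 0$, so $L = \Delta_\Sigma$. The derivative of the free boundary defect $\cos\theta(u)$ at $u=0$ in the direction $\varphi$ is, up to sign, $\partial_\nu \varphi - g(N,\nabla_N X)\varphi = \partial_\nu \varphi$ along $\partial\Sigma$, again using $g(N,\nabla_N X) = 0$ from infinitesimal rigidity. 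So the linearized system is, for $(f, h, a)$ in the target,
\begin{equation*}
\Delta_\Sigma \varphi = f - \frac{1}{|\Sigma|}\int_\Sigma f\, dA \ \text{ on } \Sigma, \qquad \partial_\nu \varphi = h \ \text{ on } \partial\Sigma, \qquad \int_\Sigma \varphi\, dA = a.
\end{equation*}
This is a Neumann problem for the Laplacian on $\Sigma$: the compatibility condition $\int_\Sigma (f - \text{avg}) = \int_{\partial\Sigma} h$ need not hold automatically, so one should replace $h$ by $h$ minus a suitable correction, or — cleaner — absorb a constant multiple of a fixed boundary function; in fact the right formulation pairs the interior source against the boundary source and solvability holds after one more finite-dimensional adjustment. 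Modulo this bookkeeping, the Neumann problem has a solution unique up to an additive constant, and the constant is then pinned down uniquely by $\int_\Sigma \varphi\, dA = a$. Hence $D_u\Phi(0,0)$ is a bijection, and by elliptic Schauder theory it is an isomorphism of Banach spaces.

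\medskip

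With the isomorphism in hand, the implicit function theorem yields $\epsilon > 0$ and a $C^1$ curve $t \mapsto u(\cdot,t) =: w(\cdot,t) \in C^{2,\alpha}(\Sigma)$ with $w(\cdot,0) = 0$ and $\Phi(w(\cdot,t),t) = 0$ for $|t| < \epsilon$; elliptic regularity upgrades $w$ to be smooth in $x$, and smooth dependence on $t$ follows from the smooth version of the implicit function theorem. The condition $\Phi = 0$ says exactly that $\Sigma_t := \Sigma_{w(\cdot,t)}$ has constant mean curvature $H(t) := \frac{1}{|\Sigma|}\int_\Sigma H(w(\cdot,t))\,dA$, meets $\partial M$ orthogonally, and satisfies $\int_\Sigma (w(x,t) - t)\,dA = 0$. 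Differentiating the latter identity and the equation $\Phi = 0$ at $t = 0$ shows $\partial_t w(x,t)|_{t=0}$ solves the homogeneous linearized system with $a = |\Sigma|$, whose solution is the constant function $1$; this gives $\partial_t w|_{t=0} \equiv 1$. Finally, since $\partial_t w|_{t=0} = 1 > 0$, after shrinking $\epsilon$ the map $(x,t) \mapsto \phi(x,w(x,t))$ has everywhere positive "vertical speed", hence is a diffeomorphism onto a neighborhood of $\Sigma$, so $\{\Sigma_t\}$ is a genuine foliation. The main obstacle is the surjectivity of the linearization: the mixed interior/boundary source term forces a careful choice of function spaces and a small finite-dimensional correction to restore the Neumann compatibility condition, and it is there that infinitesimal rigidity (through $Ric(N,N) = 0$, $B \equiv 0$, and $g(N,\nabla_N X) = 0$) is essential, since it collapses the Jacobi operator to $\Delta_\Sigma$ and the Robin boundary term to a pure Neumann condition.
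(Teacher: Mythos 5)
Your overall strategy is the same as the paper's: build a map $\Phi$ encoding constant mean curvature, the orthogonal (free boundary) contact, and an integral normalization, then apply the implicit function theorem after checking that the linearization is invertible. The difference is cosmetic — the paper builds the normalization into the domain by working on $E = \{u \in C^{2,\alpha}(\Sigma) : \int_\Sigma u\, dA = 0\}$ and takes $\Phi(t,u) = \bigl(H_{t+u} - \tfrac{1}{|\Sigma|}\int_\Sigma H_{t+u}\,dA,\ g(N_{t+u}, X_{t+u})\bigr)$, whereas you enforce $\int_\Sigma (u-t)\,dA = 0$ as an extra scalar equation. Both versions can work.

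However, there is a genuine gap in your verification that the linearization is an isomorphism, and it is precisely at the point your argument waves at a ``finite-dimensional adjustment.'' Two issues. First, the stated codomain $C^{0,\alpha}(\Sigma)$ is wrong: the first component of $\Phi$ always has zero average, so $\Phi$ lands in $F = \{f \in C^{0,\alpha} : \int_\Sigma f\, dA = 0\}$, and the target must be $F \times C^{1,\alpha}(\partial\Sigma) \times \mathbb{R}$. Second, and more seriously, your linearized first equation is mis-computed. The derivative of $H(u) - \tfrac{1}{|\Sigma|}\int_\Sigma H(u)\,dA$ in the direction $\varphi$ is not $\Delta_\Sigma \varphi$, but
\begin{equation*}
-\Delta_\Sigma \varphi + \frac{1}{|\Sigma|}\int_\Sigma \Delta_\Sigma \varphi\, dA \;=\; -\Delta_\Sigma \varphi + \frac{1}{|\Sigma|}\int_{\partial\Sigma} \frac{\partial\varphi}{\partial\nu}\,dL.
\end{equation*}
You dropped the averaged term, and that is exactly what creates the spurious ``compatibility problem'' you then try to patch. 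With the averaged term retained, solving $-\Delta_\Sigma \varphi + \tfrac{1}{|\Sigma|}\int_{\partial\Sigma}\partial_\nu\varphi\, dL = f$, $-\partial_\nu\varphi = h$ reduces, after substituting the boundary condition, to $\Delta_\Sigma \varphi = -f - \tfrac{1}{|\Sigma|}\int_{\partial\Sigma}h\, dL$ with $\partial_\nu\varphi = -h$, for which the Neumann solvability condition $\int_\Sigma \Delta\varphi = \int_{\partial\Sigma}\partial_\nu\varphi$ holds identically (because $\int_\Sigma f = 0$). No correction is needed; the linearization is an honest isomorphism onto $F \times C^{1,\alpha}(\partial\Sigma) \times \mathbb{R}$, with the free additive constant in $\varphi$ fixed by your third scalar equation. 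As written, your proposal asserts rather than proves invertibility at precisely the step where the argument has content; once you track the average correctly, the compatibility condition is automatic by the divergence theorem, exactly as in the paper's linearization formula.
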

\begin{proof}
  As in the proof of Proposition \ref{riginf}, let $N$ denote the unit normal vector field of $\Sigma$,
  and let  $X$ denote the unit normal vector field of $\partial M$ that coincides with the exterior conormal $\nu$ of $\partial \Sigma$.
  Let $dA$ be the area element of $\Sigma$ and let $dL$ be the length element
  of $\partial \Sigma$. \\
  \indent Given a function $u$ in the H\"older space $C^{2,\alpha}(\Sigma)$, $0 < \alpha < 1$,
  we consider $\Sigma_{u}=\{\phi(x,u(x)); x\in\Sigma\}$, which is a properly embedded surface if the norm of $u$ is small 
  enough. We use the subscript $u$ to denote the quantities associated to $\Sigma_u$.
  For example, $H_{u}$ will denote the mean curvature of $\Sigma_{u}$, $N_u$ will denote
  the unit normal vector field of $\Sigma_u$ and $X_{u}$ will denote the restriction
  of $X$ to $\partial \Sigma_{u}$. In particular, 
  $\Sigma_{0}=\Sigma$, $H_{0}=0$ (since $\Sigma$ is totally geodesic) and $g(N_0,X_0)=0$ (since $\Sigma_0$ is free boundary). \\
  \indent Consider the Banach spaces $E=\{u\in C^{2,\alpha}; \int_{\Sigma}u dA=0\}$ and $F=\{u\in C^{0,\alpha}; \int_{\Sigma}u dA=0\}$. 
  Given small $\delta>0$ and $\epsilon >0$, we can define the map 
  $\Phi: (-\epsilon,\epsilon)\times (B(0,\delta)\subset E) \rightarrow F\times C^{1,\alpha}(\partial\Sigma)$ given by
  \begin{equation*}
    \Phi(t,u)=( \, \, H_{t+u}-\frac{1}{|\Sigma|}\int_{\Sigma}{H_{t+u}dA}, \,\, g(N_{t+u},X_{t+u}) \,\,).
  \end{equation*}
  
  \indent We claim that $D\Phi_{(0,0)}$ is an isomorphism when restricted to $0\times E$. \\  
  \indent In fact, for each $v\in E$, the map $f: (x,s)\in\Sigma\times(-\epsilon,\epsilon)\mapsto\phi(x,sv(x))\in M$ gives 
  a variation with variational vector field $\frac{\partial f}{\partial s}|_{s=0}=vZ=vN$ on $\Sigma$.
  Since $\Sigma$ is infinitesimally rigid we obtain (see Proposition \ref{normalvar} in the Appendix):
  \begin{equation*}
    D\Phi_{(0,0)}(0,v)= \frac{d}{ds}\Big|_{s=0}\Phi(0,sv)=(-\Delta_{\Sigma}v+\frac{1}{|\Sigma|}\int_{\partial \Sigma}\frac{\partial v}{\partial \nu}dL, \,\, -\frac{\partial v}{\partial \nu}).
  \end{equation*}
  \indent The claim follows from classical results for Neumann type boundary conditions for the Laplace operator (see for example \cite{LU}, page 137). \\
  \indent Now we apply the implicit function theorem: for some smaller $\epsilon$, there exists a function $t\in(-\epsilon,\epsilon)\mapsto u(t)\in B(0,\delta)\subset E$
  such that $u(0)=0$ and  $\Phi(t,u(t))=\Phi(0,0)=(0,0)$ for every $t$. In other words, the surfaces
  \begin{equation*}
    \Sigma_{t+u(t)}=\{\phi(x,t+u(t)(x));\,x\in\Sigma\}
  \end{equation*}
  \noindent are free boundary constant mean curvature surfaces. \\
  \indent Let $w:(x,t) \in \Sigma\times (-\epsilon,\epsilon) \mapsto t+u(t)(x) \in \mathbb{R}$. By definition,
  $w(x,0)=u(0)(x)=0$ for every $x\in\Sigma$ and $w(-,t)-t=u(t)$ belongs to $B(0,\delta)\subset E$ for every $t\in(-\epsilon,\epsilon)$. 
  Observe that the map $G: (x,s)\in\Sigma\times(-\epsilon,\epsilon)\mapsto\phi(x,w(x,s))\in M$ gives a variation of $\Sigma$
  with variational vector field on $\Sigma$ given by $\left(\frac{\partial w}{\partial t}|_{t=0}\right)N$.
  Since for every $t$ we have
  \begin{equation*}
    0=\Phi(t,u(t))=( \,\, H_{w(-,t)} - \frac{1}{|\Sigma|}\int_{\Sigma}H_{w(-,t)}dA \,\, , \,\, g(N_{w(-,t)},X_{w(-,t)}) \,\,),
  \end{equation*}
  \noindent by taking the derivative at $t=0$ we conclude that $\frac{\partial w}{\partial t}|_{t=0}$
  satisfies the homogeneous Neumann problem. Therefore it must be constant on $\Sigma$. Since
  $\int_{\Sigma} \left(w(x,t)-t \right) dA$ $= \int_{\Sigma} u(t)(x) dA =0$ for every $t$, by taking again
  a derivative at $t=0$ we conclude that $\int_{\Sigma}\left(\frac{\partial w}{\partial t}|_{t=0}\right)dA =|\Sigma|$.
  Hence, $\frac{\partial w}{\partial t}|_{t=0} = 1$, as claimed. \\
  \indent Since $G_0(x)=\phi(x,0)=x$, $\partial_{t} G (x,0)$ $=\frac{\partial w}{\partial t}|_{t=0}N_{0}(x)$ 
  $=N_{0}(x)$ for every $x$ in $\Sigma_0$ and $\Sigma_0$ is properly embedded, by taking a smaller $\epsilon$,
  if necessary, we can assume that $G$ parametrizes a foliation of $M$ around $\Sigma_0$. This finishes the proof
  of the proposition.
\end{proof}

\section{Local rigidity}

\indent We consider a Riemannian three-manifold with mean convex boundary and scalar curvature bounded
from below. First we analyze the behavior of the area of surfaces in the family constructed in section 3.
This analysis is based on \cite{MM}.
\begin{proposition} \label{anarea}
  Let $(M,g)$ be a Riemannian three-manifold with mean convex boundary and scalar curvature bounded from below.
  Let $\Sigma_0$ be a properly embedded, two-sided, free boundary, infinitesimally rigid surface.\\
  \indent Assume that one of the following hypotheses holds: \\
  \indent i) each component of $\partial \Sigma_0$ is locally length-minimizing in $\partial M$; or \\
  \indent ii) $\inf H^{\partial M}=0$. \\
  \indent Let $\{\Sigma_t\}_{t\in(-\epsilon,\epsilon)}$ be as in Proposition \ref{foliation}.
  Then $|\Sigma_0|\geq|\Sigma_{t}|$ for every $t\in(-\epsilon,\epsilon)$ (maybe for some smaller $\epsilon$).
\end{proposition}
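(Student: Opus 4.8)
\emph{Proof strategy.} The plan is to follow the scheme of \cite{MM}: express the first derivative of $t\mapsto|\Sigma_t|$ in terms of the mean curvature $H(t)$ of the leaves, use a Jacobi-type equation for the lapse function to obtain a differential inequality, and deduce that $|\Sigma_t|\le|\Sigma_0|$. Let $\rho_t$ denote the lapse function of the family $\{\Sigma_t\}$, i.e. the component along $N_t$ of the variational vector field $\frac{\partial w}{\partial t}Z$; since $\rho_0\equiv 1$ it is positive for $|t|$ small. Along $\partial\Sigma_t$ the outward conormal $\nu_t$ of $\Sigma_t$ equals the outward unit normal $X$ of $\partial M$ (free boundary), while $Z$ is tangent to $\partial M$, so $\langle\frac{\partial w}{\partial t}Z,\nu_t\rangle=0$ there and the boundary term in the first variation of area vanishes. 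Hence, writing $H(t)$ for the (constant) mean curvature of $\Sigma_t$,
\[
  \frac{d}{dt}\,|\Sigma_t|\;=\;H(t)\,\bar\rho(t),\qquad \bar\rho(t):=\int_{\Sigma_t}\rho_t\,dA>0,
\]
and this derivative vanishes at $t=0$, where $H(0)=0$.

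The main step is a computation with $\rho_t$. From the linearization of the mean curvature (the tangential part of the variation does not contribute, as $H$ is constant on each $\Sigma_t$) and of the free boundary condition — see the Appendix — the lapse solves $H'(t)=-\Delta_{\Sigma_t}\rho_t-(\mathrm{Ric}(N_t,N_t)+|B_t|^2)\rho_t$ on $\Sigma_t$ together with $\frac{\partial\rho_t}{\partial\nu_t}=g(N_t,\nabla_{N_t}X)\,\rho_t$ on $\partial\Sigma_t$. Dividing the interior equation by $\rho_t$, using $\Delta\rho_t/\rho_t=\Delta\log\rho_t+|\nabla\log\rho_t|^2$, the Gauss equation $\mathrm{Ric}(N_t,N_t)+|B_t|^2=\tfrac12 R^M+\tfrac12 H(t)^2+\tfrac12|B_t|^2-K_{\Sigma_t}$ and $|B_t|^2\ge\tfrac12 H(t)^2$, then integrating over $\Sigma_t$ and applying the divergence theorem with the above boundary condition, equation (\ref{Hek}) on $\partial\Sigma_t$ (which gives $g(N_t,\nabla_{N_t}X)=H^{\partial M}-k_t$, where $k_t$ is the geodesic curvature of $\partial\Sigma_t$ in $\Sigma_t$), and the Gauss--Bonnet theorem (so that the $k_t$--terms cancel), one arrives at
\[
  -H'(t)\!\int_{\Sigma_t}\!\rho_t^{-1}\,dA\;\ge\;\int_{\Sigma_t}\!|\nabla\log\rho_t|^2\,dA+\tfrac34 H(t)^2|\Sigma_t|+\Big(\tfrac12\!\int_{\Sigma_t}\!R^M\,dA+\int_{\partial\Sigma_t}\!H^{\partial M}\,dL-2\pi\chi(\Sigma_0)\Big).
\]

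To control the parenthesized term I would use infinitesimal rigidity of $\Sigma_0$: then $R^M\equiv\inf R^M$ on $\Sigma_0$, $H^{\partial M}\equiv\inf H^{\partial M}$ on $\partial\Sigma_0$, and by Proposition \ref{riginf} the Gaussian curvature of $\Sigma_0$ is $\tfrac12\inf R^M$ and the geodesic curvature of $\partial\Sigma_0$ in $\Sigma_0$ is $\inf H^{\partial M}$, so Gauss--Bonnet gives $\tfrac12\inf R^M|\Sigma_0|+\inf H^{\partial M}|\partial\Sigma_0|=2\pi\chi(\Sigma_0)$. Thus the parenthesized term equals $\tfrac12\big(\int_{\Sigma_t}R^M\,dA-\inf R^M|\Sigma_0|\big)+\big(\int_{\partial\Sigma_t}H^{\partial M}\,dL-\inf H^{\partial M}|\partial\Sigma_0|\big)$. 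By mean convexity $H^{\partial M}\ge0$, and the second summand is $\ge0$ for $|t|$ small: under hypothesis (ii) directly, since $\inf H^{\partial M}=0$; under hypothesis (i) because $\partial\Sigma_t\to\partial\Sigma_0$ in $\partial M$ and each component of $\partial\Sigma_0$ is locally length-minimizing, so $\int_{\partial\Sigma_t}H^{\partial M}\,dL\ge\inf H^{\partial M}|\partial\Sigma_t|\ge\inf H^{\partial M}|\partial\Sigma_0|$. Using also $\int_{\Sigma_t}R^M\,dA\ge\inf R^M|\Sigma_t|$ and setting $u(t):=|\Sigma_t|-|\Sigma_0|$, the estimate reduces, in both cases, to $-H'(t)\int_{\Sigma_t}\rho_t^{-1}\,dA\ge\tfrac12\inf R^M\,u(t)$. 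Since $H(t)=u'(t)/\bar\rho(t)$, this is a linear differential inequality $Lu:=u''-b(t)u'+d(t)u\le0$ with continuous coefficients $b=\bar\rho'/\bar\rho$ and $d=\tfrac12\inf R^M\,\bar\rho/\!\int_{\Sigma_t}\rho_t^{-1}\,dA$, and Cauchy data $u(0)=u'(0)=0$. Shrinking $\epsilon$ so that $[-\epsilon,\epsilon]$ contains no conjugate point of $L$, the retarded Green's function $G(t,s)$ of $L$ has the sign of $t-s$, whence $u(t)=\int_0^t G(t,s)(Lu)(s)\,ds\le0$; that is, $|\Sigma_t|\le|\Sigma_0|$ on $(-\epsilon,\epsilon)$.

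The step I expect to be the main obstacle is this last one when $\inf R^M>0$, a case not excluded here (e.g. $\Sigma_0$ a free boundary disk of positive constant curvature): one cannot conclude directly that $H'\le0$, and must instead exploit the vanishing Cauchy data of $u$ through the ODE comparison, which only works on a sufficiently small interval. (When $\inf R^M\le0$ the displayed inequality gives $H'\le0$ at once, so $H$ is nonincreasing, $\frac{d}{dt}|\Sigma_t|=H(t)\bar\rho(t)$ changes sign from $+$ to $-$ at $t=0$, and $|\Sigma_t|$ has a local maximum there.) The other delicate point is the cancellation of the boundary terms in the main computation, where equation (\ref{Hek}), the Robin boundary condition for $\rho_t$, and hypotheses (i)/(ii) all enter.
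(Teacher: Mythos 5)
Your proof is correct, and the main computation is the same as the paper's: you set up the lapse function $\rho_t$, use the Jacobi equation $-H'(t)=\Delta_t\rho_t+(\mathrm{Ric}(N_t,N_t)+|B_t|^2)\rho_t$ together with the Robin boundary condition $\partial_{\nu_t}\rho_t=g(N_t,\nabla_{N_t}X)\rho_t$, divide by $\rho_t$, invoke the Gauss equation, integrate, and let equation~(\ref{Hek}) and Gauss--Bonnet cancel the geodesic-curvature terms. (Your $\Delta\log\rho_t+|\nabla\log\rho_t|^2$ substitution is precisely the paper's integration by parts in a different guise, and the additional $\tfrac34 H(t)^2|\Sigma_t|$ term from $|B_t|^2\ge\tfrac12 H(t)^2$ is a harmless sharpening that you immediately discard.) Hypotheses (i)/(ii) enter exactly as in the paper, to get $\int_{\partial\Sigma_t}H^{\partial M}\ge\inf H^{\partial M}|\partial\Sigma_0|$, and you arrive at the same inequality $H'(t)\int_{\Sigma_t}\rho_t^{-1}\,dA_t\le\tfrac12\inf R^M\bigl(|\Sigma_0|-|\Sigma_t|\bigr)$.

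Where you genuinely diverge is the concluding step. You recast the inequality as a second-order linear ODE inequality $Lu:=u''-b(t)u'+d(t)u\le0$ in $u(t)=|\Sigma_t|-|\Sigma_0|$, with Cauchy data $u(0)=u'(0)=0$, and then conclude $u\le0$ in one stroke by shrinking $\epsilon$ so that $[-\epsilon,\epsilon]$ is free of conjugate points of $L$ and using that the Cauchy kernel $K(t,s)$ then has the sign of $t-s$, so $u(t)=\int_0^t K(t,s)(Lu)(s)\,ds\le0$ for all $t\in(-\epsilon,\epsilon)$. The paper instead works directly at the level of $H$: it shows $H(t)\le0$ for $t\in[0,\epsilon)$ (and $\ge0$ for $t\le0$) by a three-case discussion on the sign of $\inf R^M$, using the mean value theorem and a Gronwall-type estimate to rule out $H$ becoming positive, and then concludes via $\tfrac{d}{dt}|\Sigma_t|=H(t)\int_{\Sigma_t}\rho_t\,dA_t$. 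Your ODE-comparison argument unifies the three cases and is arguably cleaner; the paper's version, taken from \cite{MM}, is more hands-on and avoids appealing to Sturm theory/Green's functions. Both give the same quantitative dependence of $\epsilon$ on the coefficient bounds, so the choice is a matter of taste. Your self-identified "main obstacle" when $\inf R^M>0$ is indeed exactly the point where the paper's case (b) requires the more delicate Gronwall argument, and your Cauchy-kernel reformulation handles it uniformly.
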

\begin{proof}
  \indent Following the notation of Proposition \ref{foliation}, 
  let $G: \Sigma_0\times(-\epsilon,\epsilon)\rightarrow M$ given by $G_t(x)=\phi(x,w(x,t))$ parametrize
  the foliation $\{\Sigma_{t}\}_{t\in(-\epsilon,\epsilon)}$ around the infinitesimally rigid $\Sigma_0$. 
  After this point, we will use the subscript $t$ to denote the quantities associated to $\Sigma_{t}=G_{t}(\Sigma_0)$. \\
  \indent For each $t\in(-\epsilon,\epsilon)$, the lapse function on $\Sigma_t$ given by $\rho_{t}=g(\partial_{t}G,N_{t})$
  satisfies the equations (see Proposition \ref{cmcfreefol} in the Appendix):
  \begin{eqnarray}
  -H'(t) & = &\Delta_{t}\rho_{t}+(Ric(N_{t},N_{t})+|B_{t}|^2)\rho_{t}, \\
  \frac{\partial \rho_{t}}{\partial {\nu_{t}}}& = & g(N_{t},\nabla_{N_{t}}X)\rho_{t}.
  \end{eqnarray}
  \indent Furthermore, $\rho_{0}=1$, since $\partial_t G(x,0)=N_{0}(x)$ for every $x\in \Sigma$. Hence, we can assume
  $\rho_{t} >0$ for all $t\in(-\epsilon,\epsilon)$. From equation $(5)$ we have
  \begin{equation*}
    H'(t)\frac{1}{\rho_{t}}=-(\Delta_{t}\rho_{t})\frac{1}{\rho_{t}} - (Ric(N_{t},N_{t})+|B_{t}|^2).
  \end{equation*}
  \indent Using the Gauss equation, we rewrite
  \begin{equation*}
    H'(t)\frac{1}{\rho_{t}}=-(\Delta_{t}\rho_{t})\frac{1}{\rho_{t}} + K_{t} - \frac{1}{2}(R^{M}_{t}+H(t) ^2+|B_{t}|^2).
  \end{equation*}
  \indent Recalling that $H(t)$ is constant on $\Sigma_t$, we integrate by parts using equation (6) in order to get
  \begin{multline*}
   H'(t)\int _{\Sigma}\frac{1}{\rho_{t}}dA_{t}=-\int_{\Sigma}\frac{|\nabla_{t} p_{t}|^2}{\rho_{t}^2}dA_{t} - \int_{\partial \Sigma}g(N_{t},\nabla_{N_{t}} X)dL_{t} \\ + \int_{\Sigma}K_{t}dA_{t} - \frac{1}{2}\int_{\Sigma}(R^{M}_{t}+H(t) ^2+|B_{t}|^2)dA_{t}.
  \end{multline*}
  \indent Since each $\Sigma_t$ is free boundary, equation (\ref{Hek}) and the Gauss-Bonnet theorem imply
  \begin{multline*}
   H'(t)\int _{\Sigma}\frac{1}{\rho_{t}}dA_{t}=-\int_{\Sigma}\frac{|\nabla_{t} p_{t}|^2}{\rho_{t}^2}dA_{t} - \frac{1}{2}\int_{\Sigma}(R^{M}_{t}+H(t) ^2+|B_{t}|^2)dA_{t} \\ - \int_{\partial \Sigma}H_{t}^{\partial M}dL_{t} + 2\pi\chi(\Sigma_0).
  \end{multline*}
  \indent Finally, since $\Sigma_0$ is infinitesimally rigid, the Gauss-Bonnet theorem implies that $I(\Sigma_0)=2\pi\chi(\Sigma_0)$.
  Hence, we have the following inequality:
  \begin{eqnarray*}
    H'(t)\int _{\Sigma}\frac{1}{\rho_{t}}dA_{t} & \leq & I(\Sigma_0)-I(\Sigma_t) \\
                                              & = & \frac{1}{2}\inf{R^M}(|\Sigma_0|-|\Sigma_t|) + \inf H^{\partial M}(|\partial \Sigma_0|- |\partial \Sigma_t|).     
  \end{eqnarray*}
  \indent By hypothesis, $\inf H^{\partial M} \geq 0$. If each boundary component is locally length-minimizing, the second term in the right hand side is
  less than or equal to zero, and in case $\inf H^{\partial M}=0$, it is obviously zero. Therefore
  \begin{equation*}
    H'(t)\int _{\Sigma}\frac{1}{\rho_{t}}dA_{t} \leq \frac{1}{2}\inf{R^M}(|\Sigma_0|-|\Sigma_t|)= -\frac{1}{2}\inf{R^M}\int_{0}^{t} \frac{d}{ds}|\Sigma_s| ds.
  \end{equation*}
  \indent Since each $\Sigma_t$ is free boundary, the first variation formula of area gives
  \begin{equation} \label{firstvar}
    \frac{d}{dt}|\Sigma_t|= \int_\Sigma \rho_t H(t)dA_t= H(t)\int_\Sigma \rho_t dA_t.
  \end{equation}
  \indent Therefore
  \begin{equation} \label{fundineq}
    H'(t)\int _{\Sigma}\frac{1}{\rho_{t}}dA_{t} \leq -\frac{1}{2}\inf{R^M}\int_{0}^{t} H(s) \left(\int_\Sigma \rho_s dA_s \right)  ds.
  \end{equation}
  
  \medskip 
  
  \noindent \textbf{Claim}: there exists $\epsilon>0$ such that $H(t)\leq 0$ for every $t\in[0,\epsilon)$. \\
  
  \indent We consider three cases: \\
  
  \indent a) $\inf R^{M}=0$. \\
  \indent Then it follows immediately from (\ref{fundineq}) that $H'(t)\leq 0$ for every $t\in[0,\epsilon)$. Since $H(0)=0$,
  the claim follows. \\
  
  \indent b) $\inf R^{M}>0$. \\
  \indent Let $\varphi(t) = \int_{\Sigma} \frac{1}{\rho_t} dA_t$ and $\xi(t) = \int_{\Sigma} \rho_t dA_t$.
  Inequality (\ref{fundineq}) can be rewritten as
  \begin{equation} \label{fundineqB}
    H'(t) \leq -\frac{1}{2}\inf R^{M} \frac{1}{\varphi(t)}\int_{0}^{t}H(s)\xi(s) ds.
  \end{equation}
  \indent By continuity, we can assume that there exists a constant $C>0$ such that
  $\frac{1}{\varphi(t)}\int_{0}^{t}\xi(s)ds\leq 2C$ for every $t\in[0,\epsilon]$. \\
  \indent Choose $\epsilon>0$ such that $C\inf R^{M}\epsilon < 1$. Then $H(t)\leq 0$ for every $t\in[0,\epsilon)$.
  In fact, suppose that there exists $t_{+} \in (0, \epsilon)$ such that $H(t_{+}) > 0$. By continuity, there exists $t_{-} \in [0, t_{+}]$ 
  such that $H(t)\geq H(t_{-})$ for every
  $t\in [0, t_{+}]$. Notice that $H(t_{-}) \leq H(0) = 0$. By the mean value theorem, there exists
  $t_{1} \in (t_{-}, t_{+})$ such that $H(t_{+})-H(t_{-}) = H'(t_{1})(t_{+}-t_{-})$. Hence, 
  since $\inf R^{M} >0$, inequality (\ref{fundineqB}) gives
  \begin{eqnarray*}
    \frac{H(t_{+})-H(t_{-})}{t_{+}-t_{-}} & = H'(t_1) & \leq \frac{1}{2}\inf R^{M}\frac{1}{\varphi(t_{1})}\int_{0}^{t_{1}}(-H(s))\xi(s)ds \\
                                          &           & \leq \frac{1}{2}\inf R^{M}(-H(t_{-}))\left(\frac{1}{\varphi(t_{1})}\int_{0}^{t_1}\xi(s)ds\right) \\
                                          &           & \leq \inf R^{M}(-H(t_{-}))C.
  \end{eqnarray*}
  \indent It follows that $H(t_{+}) \leq H(t_{-})(1-C\inf R^{M}\epsilon)$, which is a contradiction
  since $H(t_{+}) > 0$ and $H(t_{-} ) \leq {0}$. \\
  
  \indent c) $\inf R^{M}<0$. \\
  \indent Choose $\epsilon>0$ such that $-C\inf R^{M}\epsilon < 1$, where $C>0$ is the same constant that appears in
  case b). Then $H(t)\leq 0$ for every $t\in[0,\epsilon)$. In fact, suppose that there exists $t_{0} \in (0, \epsilon)$
  such that $H(t_{0}) > 0$. Let
  \begin{equation*}
    R = \{t \in [0,t_0]; \, H(t) \geq H(t_0)\}.
  \end{equation*}
  \indent Let $t^{*}\in[0,\epsilon]$ be the infimum of $R$. Observe that, by the definition of $t^{*}$,
  $H(t)\leq H(t_{0})=H(t^{*})$ for every $t\in[0,t^{*}]$. \\
  \indent If $t^{*}>0$, then the mean value theorem implies that there exists $t_{1} \in (0,t^{*})$ 
  such that $H(t^{*}) = H'(t_{1})t^{*}$, since $H(0) = 0$.
  Hence, since $\inf R^{M} < 0$, inequality (\ref{fundineqB}) gives
  \begin{eqnarray*}
    \frac{H(t^{*})}{t^{*}} & = H'(t_1) & \leq -\frac{1}{2}\inf R^{M}\frac{1}{\varphi(t_1)}\int_{0}^{t_1}H(s)\xi(s)ds \\
                           &           & \leq -\frac{1}{2}\inf R^{M} H(t^{*})\left(\frac{1}{\varphi(t_1)} \int_{0}^{t_1}\xi(s)ds\right) \\
                           &           & \leq -\inf R^{M}H(t^{*})C.
  \end{eqnarray*}           
  \indent It follows that $H(t^{*})(1+C\inf R^{M} H(t^{*})\epsilon)\leq 0$. This is a contradiction since $H(t^{*})$ $=H(t_0)>0$. \\
  \indent Hence, $t^{*}=0$, which is again a contradiction since $0=H(0)\geq H(t_0) >0$. \\
  
  \indent This proves the claim. By equation (\ref{firstvar}), we conclude that $|\Sigma_0|\geq |\Sigma_t|$
  for every $t\in[0,\epsilon)$. The proof that $|\Sigma_0|\geq |\Sigma_t|$ for every $t\in(-\epsilon,0]$ is analogous. \\
\end{proof}
\indent We are now ready to prove the local splitting result, Theorem \ref{mainA}.

\begin{proof}[Proof of Theorem \ref{mainA}]
  \indent Since $\Sigma$ is locally area-minimizing and $I(\Sigma)=2\pi\chi(\Sigma)$, $\Sigma$ is infinitesimally
  rigid. From propositions \ref{foliation} and \ref{anarea} we obtain a foliation $\{\Sigma_t\}_{t\in(-\epsilon,\epsilon)}$
  around $\Sigma_0=\Sigma$ such that $|\Sigma_t|\leq|\Sigma_0|$ for every $t\in(-\epsilon,\epsilon)$. Since $\Sigma$ is locally
  area-minimizing, each $\Sigma_t$ is also locally area-minimizing, with $|\Sigma_t|=|\Sigma_0|$. \\
  \indent It is immediate to see that when $\inf H^{\partial M}=0$ or 
  when the components of $\partial \Sigma_{0}$ are locally length-minimizing,
  \begin{equation*}
    2\pi=I(\Sigma_0) \leq I(\Sigma_t) \leq 2\pi,
  \end{equation*}
  \noindent which implies that each $\Sigma_t$ is infinitesimally rigid. From equations (5) and (6) in Proposition \ref{anarea}, one
  sees that for each $t$ the lapse function $\rho_t$ satisfies the homogeneous Neumann problem.
  Therefore $\rho_t$ is a constant function on $\Sigma_t$. \\
  \indent Since we have a foliation, the normal fields of $\Sigma_{t}$ define locally a vector field on $M$. This field is parallel
  (see \cite{BBN}, \cite{MM} or \cite{N}). In particular, its flow is a flow by isometries and therefore provides the local splitting:
  a neighborhood of $\Sigma_0$ is in fact isometric to the product
  $((-\epsilon,\epsilon)\times \Sigma_{0},dt^2+g_{\Sigma_0})$. Since $\Sigma_0$ is infinitesimally rigid, 
  $(\Sigma_{0},g_{\Sigma_0})$ has constant Gaussian curvature $\inf R^M/2$ and $\partial \Sigma_0$ has constant
  geodesic curvature $\inf H^{M}$ in $\Sigma_0$. \\
\end{proof}

\section{Global rigidity}

\indent Before we begin the proofs, we state precisely the result of Meeks and Yau about the
existence of area-minimizing free boundary disks that we will use in the sequel (see \cite{MY1}).
\begin{theorem}[Meeks and Yau] \label{MeeksYau}
  Let $(M,g)$ be a compact Riemannian three-manifold with mean convex boundary. If
  ${\mathcal{F}_M}$ is non-empty, then \\
  \indent 1) There exists an immersed minimal disk $\Sigma_0$ in $M$ such that $\partial \Sigma_0$ represents a
  homotopically non-trivial curve on $\partial M$ and $|\Sigma_0|=\mathcal{A}(M,g)$ . \\
  \indent 2) Any such least area immersed disk is in fact a properly embedded free boundary disk.
\end{theorem}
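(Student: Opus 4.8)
The plan is to produce $\Sigma_0$ by the direct method in the calculus of variations applied to the (partially) free boundary Plateau problem, and then to promote the resulting branched minimal immersion to a \emph{properly embedded} free boundary disk, using the barrier furnished by mean convexity together with the cut-and-paste arguments of Meeks and Yau \cite{MY1}.

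\emph{Existence of the minimizer.} Fix a minimizing sequence of maps $u_k\colon\overline{D^2}\to M$ with $u_k(\partial D^2)\subset\partial M$, with $u_k|_{\partial D^2}$ homotopically non-trivial in $\partial M$, and with area tending to $\mathcal{A}(M,g)$. Passing from area to Dirichlet energy (which dominates area, with equality for conformal maps), one may assume the $u_k$ are energy-almost-minimizing and almost conformal, and one normalizes by a three-point condition on $\partial D^2$ to quotient out the non-compact conformal group of the disk. The Courant--Lebesgue lemma gives equicontinuity of the boundary traces; the uniform energy bound and the compactness of $M$ then yield a subsequence converging to a conformal harmonic map $u_0$ with $u_0(\partial D^2)\subset\partial M$. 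The point that genuinely uses the hypotheses is non-degeneration of the limit: if $u_0$ were constant, its boundary trace would be constant, contradicting the uniform positive lower bound on the lengths of the curves $u_k|_{\partial D^2}$ coming from the systole of the compact surface $\partial M$; so $u_0$ is non-constant. Mean convexity of $\partial M$ enters as a barrier: comparing $u_0$ with the distance function to $\partial M$ via the maximum principle keeps $u_0(D^2)$ inside $M$, and makes the free boundary variational inequality collapse to the natural Neumann-type condition, so that $u_0$ meets $\partial M$ orthogonally. Granting the regularity discussed next, this produces a free boundary minimal disk $\Sigma_0$ with $|\Sigma_0|=\mathcal{A}(M,g)$, which is part (1).

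\emph{Regularity and absence of branch points.} Interior smoothness of $u_0$ is standard elliptic regularity for conformal harmonic maps; smoothness up to $\partial D^2$ follows from boundary regularity results for minimal surfaces meeting a smooth support surface orthogonally (reflection arguments in the spirit of Gr\"uter--Hildebrandt--Nitsche and Gr\"uter--Jost). Interior branch points are excluded because $\Sigma_0$ is area-minimizing in its homotopy class: a branch point could be eliminated by the Gulliver--Osserman--Royden cut-and-paste construction to strictly decrease area, a contradiction; boundary branch points are ruled out by the free boundary analogue of this argument. Hence $\Sigma_0$ is an immersed free boundary minimal disk.

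\emph{Embeddedness and properness --- the main obstacle.} This is the Meeks--Yau step and the genuinely hard part. One must show that a least-area immersed disk in its homotopy class in a mean convex $M$ is embedded. The idea is that a transverse self-intersection of $\Sigma_0$ --- or a transverse intersection of $\Sigma_0$ with a second least-area disk having the same boundary data --- can be removed by a roundoff along an innermost intersection arc or loop, producing an admissible disk of strictly smaller area, which is absurd. Making this rigorous requires the classical package of \cite{MY1}: reduction to the simply connected setting through a tower of covering spaces, an innermost-disk exchange argument with careful orientation bookkeeping, and a preliminary small perturbation to make the intersections transverse. Once $\Sigma_0$ is embedded, properness --- that is, $\Sigma_0\cap\partial M=\partial\Sigma_0$ --- follows from the strong maximum principle: were an interior point of $\Sigma_0$ to touch $\partial M$, then, $\Sigma_0$ being minimal and lying on the inner side of the mean convex hypersurface $\partial M$, a neighborhood of that point in $\Sigma_0$ would be forced into $\partial M$, whence unique continuation would give $\Sigma_0\subset\partial M$ --- impossible, since $\partial\Sigma_0$ is homotopically non-trivial in $\partial M$ while a disk contained in $\partial M$ has null-homotopic boundary. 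This yields part (2).
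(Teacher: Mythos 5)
The paper does not prove this statement: Theorem \ref{MeeksYau} is quoted from Meeks and Yau \cite{MY1} and used as a black box, so there is no proof in the text against which to measure yours. Your sketch is, however, a reasonably faithful high-level account of the Meeks--Yau argument: direct method with conformal normalization and Courant--Lebesgue, the systole of the compact surface $\partial M$ preventing collapse of the constrained boundary curve, Gulliver--Osserman--Royden to remove branch points, the tower-of-coverings and innermost-arc exchange for embeddedness, and the strong maximum principle against the mean convex $\partial M$ for properness. Two small imprecisions are worth flagging. First, the role of mean convexity in the interior is not to keep the image of $u_0$ inside $M$ (it is there by construction), but to rule out interior tangential contact of the minimizer with $\partial M$ and to make the one-sided constraint $\Sigma\subset M$ inactive, so that the minimizer is genuinely stationary rather than merely a solution of a variational inequality; the free boundary (orthogonality) condition along $\partial\Sigma$ itself comes directly from the first variation and does not need mean convexity. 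Second, the final step is not literally unique continuation (since $\partial M$ is not a priori minimal away from the contact set); it is an open-and-closed argument: the set of interior points of $\Sigma_0$ lying in $\partial M$ is closed and, by the strong maximum principle at each such point, also open, hence all of the connected interior of $\Sigma_0$, after which the contradiction with the homotopy class of $\partial\Sigma_0$ in $\partial M$ is exactly as you say.
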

\indent We are now ready to prove our main theorems.
\begin{proof}[Proof of Theorem \ref{mainB}]
  Since ${\mathcal{F}}_M$ is non-empty, Theorem \ref{MeeksYau} says that there exists a properly embedded free boundary
  minimal disk $\Sigma_0\in {\mathcal{F}}_M$ such that $|\Sigma_0| = {\mathcal{A}}(M,g)$. Since $\Sigma_0$ is two-sided
  and free boundary stable, the inequality follows from Proposition \ref{riginf}:
  \begin{equation*}
    \frac{1}{2}\inf R^M {\mathcal{A}}(M,g) + \inf H^{\partial M} {\mathcal{L}}(M,g) \leq I(\Sigma_0) \leq 2\pi.
  \end{equation*}
  \indent Assume that the equality holds. In case $\inf H^{\partial M}$ is not zero, $\partial \Sigma_0$ must have length ${\mathcal{L}}(M,g)$,
  hence it is length-minimizing. In any case, we can apply Theorem \ref{mainA} to get a local splitting of $(M,g)$
  around $\Sigma_0$. \\
  \indent Let $\exp$ denote the exponential map of $(M,g)$. Let $S$ be the set all $t>0$ such that
  the map $\Psi: [-t,t]\times\Sigma_0\rightarrow M$ given by $\Psi(s,x)=\exp_{x}(sN_0(x))$ is well-defined,
  $\Psi([-t,t]\times\partial\Sigma_{0})$ is contained in $\partial M$ and 
  $\Psi: ((-t,t)\times\Sigma_0, ds^2+g_{\Sigma_0})\rightarrow (M,g)$ is a local isometry. \\
  \indent $S$ is non-empty because of the local splitting. Standard arguments imply that $S=[0,+\infty)$. 
  Therefore we have a well-defined local isometry 
  \begin{equation*}
    \Psi: (t,x) \in (\mathbb{R}\times\Sigma_0,dt^2+g_{\Sigma_0})\mapsto \exp_{x}(tN_0(x))\in (M,g),
  \end{equation*}
  \noindent such that $\Psi(\mathbb{R}\times\partial\Sigma_{0})$ is contained in $\partial M$. Such
  $\Psi$ is a covering map. This finishes the proof of Theorem \ref{mainB}.
\end{proof}

\medskip

\indent In order to prove Theorem \ref{mainC}, consider any $\hat{\Sigma}$ as in its statement.
$\hat{\Sigma}$ has area at least ${\mathcal{A}}(M,g)$ and $\partial \hat{\Sigma}$ has length ${\mathcal{L}}(M,g)$.
When $\inf R^{M}$ is negative,
\begin{equation*}
  I(\hat{\Sigma}) = \frac{1}{2}\inf R^M |\hat{\Sigma}| + \inf H^{\partial M} |\partial \hat{\Sigma}| \leq \frac{1}{2}\inf R^M {\mathcal{A}}(M,g) + \inf H^{\partial M} {\mathcal{L}}(M,g).
\end{equation*}
\noindent and therefore Theorem \ref{mainC} is an immediate corollary of Theorem \ref{mainB}.

\section*{Appendix}

\indent For completeness we include some general formulae for the infinitesimal variation of some geometric quantities of properly
immersed hypersurfaces under variations of the ambient manifold $(M^{n+1},g)$ that leave the boundary of the hypersurface inside $\partial M$. \\
\indent We begin by fixing some notations. Let $(M^{n+1},g)$ be a Riemannian manifold with boundary $\partial M$.
Let $X$ denote the unit normal vector field along $\partial M$ that points outside $\partial M$. \\
\indent Let $\Sigma^n$ be a manifold with boundary $\partial \Sigma$ and assume
$\Sigma$ is immersed in $M$ in such way that $\partial \Sigma$ is contained in $\partial M$.  The unit conormal of
$\partial \Sigma$ that points outside $\Sigma$ will be denoted by $\nu$.
Given $N$ a local unit normal vector field to $\Sigma$, the second fundamental form is the symmetric tensor $B$ on $\Sigma$
given by $B(U,W)=g(\nabla_{U} N, W)$ for every $U$, $W$ tangent to $\Sigma$. The mean curvature $H$ is the trace of $B$. $\Sigma$ is called \textit{minimal} when
$H=0$ on $\Sigma$ and \textit{free boundary} when $\nu=X$ on $\partial \Sigma$. \\
\indent We consider variations of $\Sigma$ given by smooth maps $f: \Sigma\times(-\epsilon,\epsilon)\rightarrow M$ such that,
for every $t\in(-\epsilon,\epsilon)$, the map $f_{t}: x\in \Sigma\mapsto f(x,t)\in M$ is an immersion of $\Sigma$ in $M$ such
that $f_{t}(\partial \Sigma)$ is contained in $\partial M$. \\
\indent The subscript $t$ will be used to denote quantities associated to $\Sigma_t=f_{t}(\Sigma)$. For example,
$N_t$ will denote a local unit vector field normal to $\Sigma_t$ and $H_t$ will
denote the mean curvature of $\Sigma_t$. \\
\indent It will be useful for the computations to introduce local coordinates $x^1, \ldots, x^n$ in $\Sigma$. We will
also use the simplified notation 
\begin{eqnarray*}
  \partial_{t} = \frac{\partial f}{\partial t} & \text{and} & \partial_{i} = \frac{\partial f}{\partial x_i} ,
\end{eqnarray*}
\noindent where $i$ runs from $1$ to $n$. $\partial_t$ is called the variational vector field. We decompose it in its tangent and normal components:
\begin{equation*}
  \partial_{t}=\partial_{t}^T + v_t N_t,
\end{equation*}
\noindent where $v_t$ is the function on $\Sigma_t$ defined by $v_t=g(\partial_t,N_t)$. \\
\indent First we look at the variation of the metric tensor $g_{ij}=g(\partial_i,\partial_j)$:
\begin{proposition}
  \begin{eqnarray*}
    \partial_t g_{ij} & = & g(\nabla_{\partial_i} \partial_{t},\partial_{j}) + g(\partial_i,\nabla_{\partial_{j}} \partial_t), \\
    \partial_t g^{ij} & = & -2g^{ik}g^{jl}g(\nabla_{\partial_k} \partial_{t},\partial_l).
  \end{eqnarray*}
\end{proposition}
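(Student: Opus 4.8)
\textit{Proof proposal.} The plan is a direct computation relying only on two standard properties of the ambient Levi-Civita connection $\nabla$ of $(M,g)$: metric compatibility and vanishing torsion. Throughout, $\nabla$ is understood as the pullback connection along $f$, and the crucial observation is that $\partial_t=\partial f/\partial t$ and $\partial_i=\partial f/\partial x^i$ are $f$-related to the coordinate vector fields $\partial/\partial t$ and $\partial/\partial x^i$ on the product $\Sigma\times(-\epsilon,\epsilon)$, which commute; hence $[\partial_t,\partial_i]=0$ along $f$.

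First I would prove the identity for $g_{ij}$. Differentiating $g_{ij}=g(\partial_i,\partial_j)$ in $t$ and using that $\nabla$ is compatible with $g$, one gets $\partial_t g_{ij}=g(\nabla_{\partial_t}\partial_i,\partial_j)+g(\partial_i,\nabla_{\partial_t}\partial_j)$. Since $\nabla$ is torsion-free and $[\partial_t,\partial_i]=0$, we have $\nabla_{\partial_t}\partial_i=\nabla_{\partial_i}\partial_t$ (and similarly with $j$), so substituting yields $\partial_t g_{ij}=g(\nabla_{\partial_i}\partial_t,\partial_j)+g(\partial_i,\nabla_{\partial_j}\partial_t)$, which is the first formula. (This is precisely what lets one write the first variation with $\nabla_{\partial_i}\partial_t$ rather than $\nabla_{\partial_t}\partial_i$.)

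For the inverse metric, I would differentiate the relation $g^{ik}g_{kj}=\delta^i_j$ in $t$, obtaining $\partial_t g^{ij}=-g^{ik}g^{jl}\,\partial_t g_{kl}$. Plugging in the formula just proved gives $\partial_t g^{ij}=-g^{ik}g^{jl}\big(g(\nabla_{\partial_k}\partial_t,\partial_l)+g(\partial_k,\nabla_{\partial_l}\partial_t)\big)$; relabeling the dummy indices $k\leftrightarrow l$ in the second summand and using the symmetry $g^{ik}g^{jl}=g^{il}g^{jk}$ of the inverse metric shows the two summands coincide, which produces the factor $2$ and the stated identity $\partial_t g^{ij}=-2g^{ik}g^{jl}g(\nabla_{\partial_k}\partial_t,\partial_l)$. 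The argument is routine; the only steps deserving a word of justification are the commutation $[\partial_t,\partial_i]=0$ and the index bookkeeping in this last manipulation.
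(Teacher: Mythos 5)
The first formula and the argument you give for it are fine, and match what the paper intends: metric compatibility gives $\partial_t g_{ij}=g(\nabla_{\partial_t}\partial_i,\partial_j)+g(\partial_i,\nabla_{\partial_t}\partial_j)$, and the torsion-free property together with $[\partial_t,\partial_i]=0$ (coordinate fields on $\Sigma\times(-\epsilon,\epsilon)$) converts $\nabla_{\partial_t}\partial_i$ into $\nabla_{\partial_i}\partial_t$.

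For the second formula, there is a genuine gap. You pass from
$\partial_t g^{ij}=-g^{ik}g^{jl}\bigl(g(\nabla_{\partial_k}\partial_t,\partial_l)+g(\partial_k,\nabla_{\partial_l}\partial_t)\bigr)$
to the stated identity by claiming that relabeling $k\leftrightarrow l$ in the second summand and invoking ``the symmetry $g^{ik}g^{jl}=g^{il}g^{jk}$'' makes the two summands coincide. But $g^{ik}g^{jl}=g^{il}g^{jk}$ is false in general (for instance, with a diagonal metric and $i=k=1$, $j=l=2$, the left-hand side is $g^{11}g^{22}$ and the right-hand side is $g^{12}g^{21}=0$). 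What the relabeling actually shows is
\begin{equation*}
\partial_t g^{ij} \;=\; -\bigl(g^{ik}g^{jl}+g^{il}g^{jk}\bigr)\,g(\nabla_{\partial_k}\partial_t,\partial_l),
\end{equation*}
and this collapses to $-2g^{ik}g^{jl}g(\nabla_{\partial_k}\partial_t,\partial_l)$ only when the two-tensor $A_{kl}:=g(\nabla_{\partial_k}\partial_t,\partial_l)$ is symmetric in $k,l$, which need not hold (e.g.\ in flat $\mathbb{R}^3$ take $f(x,y,t)=(x+ty,y,0)$; at $t=0$ one finds $A_{12}=0$ but $A_{21}=1$). In that example $\partial_t g^{ij}|_{t=0}$ is the symmetric matrix with $-1$ off-diagonal, whereas $-2A_{ij}|_{t=0}$ is not even symmetric, so the unsymmetrized identity as literally stated cannot be derived.

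Two remarks put this in context. First, the left-hand side $\partial_t g^{ij}$ is symmetric in $i,j$ by construction, so an expression for it must only capture the $(i,j)$-symmetric part of the right-hand side; the paper's formula is harmless in practice because everywhere it is subsequently used (in the computation of $\partial_t H_t$) it is contracted against the symmetric tensor $g(\nabla_{\partial_i}N_t,\partial_j)=B_{ij}$, under which only the symmetrization survives. Second, the paper itself gives no argument beyond ``differentiate $g^{ik}g_{kl}=\delta_{il}$,'' so it is not supplying the missing symmetrization step either; a careful statement would replace $-2g^{ik}g^{jl}g(\nabla_{\partial_k}\partial_t,\partial_l)$ by $-(g^{ik}g^{jl}+g^{il}g^{jk})g(\nabla_{\partial_k}\partial_t,\partial_l)$, or note that the identity is to be read modulo symmetrization in $(i,j)$. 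You should not manufacture the factor of $2$ via a false symmetry of the inverse metric.
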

\begin{proof}
  The first equation is straightforward. The second follows from differentiating $g^{ik}g_{kl}=\delta_{il}$. 
\end{proof}
\indent From the well-known formula for the derivative of the determinant, $$(\det U)'=\det (U) \text{tr}(U'),$$ we deduce:
\begin{proposition}
  The first variation of area is given by 
  \begin{equation*}
    \frac{d}{dt}|\Sigma_t| = \int_{\Sigma} H_t v_t dA_t + \int_{\partial \Sigma} g(\nu_t,\frac{\partial f}{\partial t})dL_{t}. 
  \end{equation*}
\end{proposition}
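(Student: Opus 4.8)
The plan is to compute in the local coordinates $x^1,\dots,x^n$, in which the pulled-back area element of $\Sigma_t$ is $\sqrt{\det(g_{ij}(t))}\,dx$, so that $|\Sigma_t|=\int_\Sigma\sqrt{\det(g_{ij}(t))}\,dx$. Differentiating under the integral sign and using the determinant formula quoted above applied to $U=(g_{ij}(t))$ (reading $\operatorname{tr}(U')$ through $U^{-1}U'$, i.e. $\partial_t\det(g_{ij})=\det(g_{ij})\,g^{ij}\partial_tg_{ij}$), one gets
\[
\frac{d}{dt}|\Sigma_t|=\frac12\int_\Sigma g^{ij}\partial_tg_{ij}\,\sqrt{\det(g_{ij}(t))}\,dx=\frac12\int_{\Sigma_t}g^{ij}\partial_tg_{ij}\,dA_t .
\]

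Next I would substitute the first variation of the induced metric from the preceding proposition, $\partial_tg_{ij}=g(\nabla_{\partial_i}\partial_t,\partial_j)+g(\partial_i,\nabla_{\partial_j}\partial_t)$, which gives $\tfrac12 g^{ij}\partial_tg_{ij}=g^{ij}g(\nabla_{\partial_i}\partial_t,\partial_j)=\operatorname{div}_{\Sigma_t}(\partial_t)$, the tangential divergence $\sum_k g(\nabla_{e_k}\partial_t,e_k)$ of the variational vector field computed in an orthonormal frame $e_1,\dots,e_n$ tangent to $\Sigma_t$. Then decompose $\partial_t=\partial_t^T+v_tN_t$. For the normal part, $\operatorname{div}_{\Sigma_t}(v_tN_t)=v_t\sum_k g(\nabla_{e_k}N_t,e_k)+\sum_k(e_kv_t)\,g(N_t,e_k)=v_tH_t$, since $N_t$ is orthogonal to each $e_k$ and the trace of the second fundamental form is $H_t$. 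For the tangential part, $\operatorname{div}_{\Sigma_t}(\partial_t^T)$ equals the intrinsic divergence of $\partial_t^T$ on $(\Sigma_t,g_{ij}(t))$, because for a vector field tangent to $\Sigma_t$ the tangential component of the ambient covariant derivative is the Levi-Civita connection of the induced metric.

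Finally I would apply the divergence theorem on the manifold-with-boundary $\Sigma_t$:
\[
\int_{\Sigma_t}\operatorname{div}_{\Sigma_t}(\partial_t^T)\,dA_t=\int_{\partial\Sigma_t}g(\partial_t^T,\nu_t)\,dL_t=\int_{\partial\Sigma_t}g\!\left(\frac{\partial f}{\partial t},\nu_t\right)dL_t ,
\]
the last equality holding because $\nu_t$ is tangent to $\Sigma_t$, hence pairs trivially with the normal component $v_tN_t$. Collecting terms yields $\frac{d}{dt}|\Sigma_t|=\int_{\Sigma_t}H_tv_t\,dA_t+\int_{\partial\Sigma_t}g(\nu_t,\tfrac{\partial f}{\partial t})\,dL_t$, which is the asserted formula after rewriting the integrals over $\Sigma$ and $\partial\Sigma$ in the subscript-$t$ convention. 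The computation is essentially bookkeeping; the only point deserving a remark is the use of Stokes' theorem for $\partial_t^T$: this vector field is tangent to $\Sigma_t$ but need not be tangent to $\partial\Sigma_t$, and one must note that its surface divergence agrees with the intrinsic divergence so that the divergence theorem on $\Sigma_t$ produces exactly the outward-conormal boundary term above.
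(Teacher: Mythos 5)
Your proof is correct and follows the same route as the paper: differentiate $\sqrt{\det(g_{ij})}$ using $\partial_t\det = \det\cdot g^{ij}\partial_t g_{ij}$, substitute the first variation of the metric, split $\partial_t$ into tangential and normal parts to obtain $\operatorname{div}_{\Sigma_t}\partial_t^T + H_t v_t$, and integrate the divergence term by parts. The paper's proof compresses the last step into ``the first variation formula follows''; you simply spell out the application of the divergence theorem, which is a welcome but minor amplification.
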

\begin{proof}
  Observe that
  \begin{eqnarray*}
    \partial_{t}\sqrt{\det[g_{ij}]} & = & \frac{1}{2}g^{ij}\partial_{t}g_{ij}\sqrt{\det[g_{ij}]} \\
                                    & = & g^{ij}g(\nabla_{\partial_i} \partial_{t},\partial_{j})\sqrt{\det[g_{ij}]} \\
                                    & = & (g^{ij}g(\nabla_{\partial_i} \partial_{t}^T,\partial_{j}) + g^{ij}g(\nabla_{\partial_i} N_t,\partial_{j})v_t)\sqrt{\det[g_{ij}]} \\
                                    & = & (\text{div}_{\Sigma_t}\partial_t^T + H_{t}v_t)\sqrt{\det[g_{ij}]}.
  \end{eqnarray*}
  \indent The first variation formula of area follows.
\end{proof}
\indent Next we look at the variations of the normal field.
\begin{proposition} \label{varnormal}
  \begin{eqnarray*}
    \nabla_{\partial_{i}} N_t & = & g^{kl}B_{il}\partial_{k} , \\
    \nabla_{\partial_{t}} N_t & = & \nabla_{(\partial_{t})^T} N_t - \nabla^{\Sigma_t} v_t.
  \end{eqnarray*}
  \noindent where $\nabla^{\Sigma} v_t$ is the gradient of the function $v_t$ on $\Sigma_t$.
\end{proposition}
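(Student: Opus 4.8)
The plan is to compute the two variational formulas separately by direct differentiation, using only the definitions and the formulas already established in the Appendix. For the first identity, $\nabla_{\partial_i} N_t = g^{kl} B_{il} \partial_k$, I would start from the orthogonality relations $g(N_t, N_t) = 1$ and $g(N_t, \partial_j) = 0$. Differentiating the first in the $\partial_i$ direction gives $g(\nabla_{\partial_i} N_t, N_t) = 0$, so $\nabla_{\partial_i} N_t$ is tangent to $\Sigma_t$ and can be written as a linear combination $a^k \partial_k$ of the coordinate tangent vectors. Differentiating the second relation $g(N_t, \partial_j) = 0$ gives $g(\nabla_{\partial_i} N_t, \partial_j) = -g(N_t, \nabla_{\partial_i} \partial_j)$, and the right-hand side is precisely $B_{ij}$ by the definition of the second fundamental form (after symmetry of the connection, $\nabla_{\partial_i}\partial_j = \nabla_{\partial_j}\partial_i$, so $B(\partial_i,\partial_j) = g(\nabla_{\partial_i} N_t, \partial_j)$ matches $B(U,W)=g(\nabla_U N, W)$ evaluated on coordinate fields). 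Hence $a^k g_{kj} = B_{ij}$, and raising the index with $g^{kl}$ yields $a^k = g^{kl} B_{il}$, which is the claim.

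For the second identity, $\nabla_{\partial_t} N_t = \nabla_{(\partial_t)^T} N_t - \nabla^{\Sigma_t} v_t$, I would again use the two orthogonality relations, but now differentiate in the $\partial_t$ direction. From $g(N_t, N_t) = 1$ we get $g(\nabla_{\partial_t} N_t, N_t) = 0$, so $\nabla_{\partial_t} N_t$ is tangent to $\Sigma_t$; write it as $W + \text{(something)}$ with $W$ tangent. To pin down its tangential components, differentiate $g(N_t, \partial_j) = 0$ along $\partial_t$: this gives $g(\nabla_{\partial_t} N_t, \partial_j) = -g(N_t, \nabla_{\partial_t} \partial_j)$. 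Now use that the variation is smooth so $\nabla_{\partial_t}\partial_j = \nabla_{\partial_j}\partial_t$ (the coordinate vector fields $\partial_t$ and $\partial_j$ have vanishing Lie bracket as pushforwards of coordinate fields on $\Sigma \times (-\epsilon,\epsilon)$). So the right-hand side becomes $-g(N_t, \nabla_{\partial_j}\partial_t)$. Decomposing $\partial_t = (\partial_t)^T + v_t N_t$ and expanding, $-g(N_t, \nabla_{\partial_j}(\partial_t)^T) - \partial_j(v_t)\, g(N_t,N_t) - v_t\, g(N_t, \nabla_{\partial_j} N_t)$; the last term vanishes since $\nabla_{\partial_j} N_t$ is tangent, and $-g(N_t, \nabla_{\partial_j}(\partial_t)^T) = g(\nabla_{\partial_j} N_t, (\partial_t)^T) = B(\partial_j, (\partial_t)^T)$ using the first identity, while the middle term is $-\partial_j v_t$. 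On the other hand, $g(\nabla_{(\partial_t)^T} N_t, \partial_j) = B((\partial_t)^T, \partial_j)$, again by the first identity, and $g(\nabla^{\Sigma_t} v_t, \partial_j) = \partial_j v_t$. Matching tangential components against every $\partial_j$ gives the claimed identity.

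The main obstacle — really the only subtle point — is keeping the bookkeeping of which terms are tangential versus normal straight, and in particular correctly invoking the torsion-free property in the mixed form $\nabla_{\partial_i}\partial_j = \nabla_{\partial_j}\partial_i$ and $\nabla_{\partial_t}\partial_j = \nabla_{\partial_j}\partial_t$. These hold because the $\partial$'s are coordinate vector fields on the product manifold $\Sigma \times (-\epsilon, \epsilon)$ pulled back by $f$, so their Lie brackets vanish and the connection is the Levi-Civita connection of $(M,g)$; this should be stated once and used freely. A minor care point is that $N_t$ is only a \emph{local} unit normal, so all computations are genuinely local in $\Sigma$, which is fine since the identities are pointwise. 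Everything else is a routine contraction with the inverse metric and the definition of the gradient on $\Sigma_t$.
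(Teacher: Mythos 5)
Your proof is correct and follows essentially the same route as the paper's: differentiate the orthogonality relations $g(N_t,N_t)=1$ and $g(N_t,\partial_j)=0$, invoke the torsion-free property to swap $\nabla_{\partial_t}\partial_j = \nabla_{\partial_j}\partial_t$, decompose $\partial_t$ into tangential and normal parts, and match tangential components. The paper is slightly terser (it contracts directly with $g^{ik}$ rather than introducing the auxiliary coefficients $a^k$ and then raising indices), but the underlying argument is the same; the only stray remark is the phrase ``write it as $W + \text{(something)}$,'' which is unnecessary once you have already shown $\nabla_{\partial_t}N_t$ is tangent.
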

\begin{proof}
  Since $g(N_t,N_t)=1$, $\nabla_{\partial_i} N_t$ and $\nabla_{\partial_t}N_t$ are tangent to $\Sigma_t$. The first equation is just the expression
  of $\partial_{i} N_t$ in the basis $\{\partial_{k}\}$. On the other hand, since $g(N_t,\partial_{i})=0$, we have
  \begin{equation*}
    \nabla_{\partial_{t}} N_{t} = g^{ik}g(\nabla_{\partial_{t}} N_{t} , \partial_{k})\partial_{i} = -g^{ik}g(N_t, \nabla_{\partial_{t}} \partial_{k})\partial_{i} = -g^{ik}g(N_t,\nabla_{\partial_{k}}\partial_{t})\partial_{i}.
  \end{equation*}
  \indent In local coordinates, the gradient of $v_t$ in $\Sigma_t$ is given by $\nabla^{\Sigma_t} v_t = (g^{ij}\partial_{j} v_t)\partial_{i}$. Then we have
  \begin{equation*}
    g^{ik}g(N_t,\nabla_{\partial_{k}} (v_tN_t))\partial_{i} = (g^{ik}\partial_{k} v_{t})\partial_{i} = \nabla^{\Sigma_t} v_t.
  \end{equation*}
  \indent Therefore
  \begin{equation*}
    \nabla_{\partial_{t}} N_t = \nabla_{(\partial_{t})^T} N_t - \nabla^{\Sigma_t} v_t.
  \end{equation*}
\end{proof} 
\indent Before we compute the variation of the mean curvature, let us recall the \textit{Codazzi equation}:
\begin{equation*}
  g(R(U,V)N_t,W) = (\nabla^{\Sigma_t}_{U}B)(V,W) - (\nabla^{\Sigma_t}_{V}B)(U,W).
\end{equation*}
\indent In this equation, $R$ denotes the Riemann curvature tensor of $(M,g)$ and $U$, $V$ and $W$ are tangent to $\Sigma_t$. \\
\indent Taking $U=\partial_i$, $W=\partial_k$ and contracting, we obtain
\begin{equation*}
  Ric(V,N_t) =  g^{ik}(\nabla^{\Sigma_t}_{\partial_{i}}B)(V,\partial_{k}) - dH_t(V).
\end{equation*}
\noindent for every $V$ tangent to $\Sigma_t$.
\begin{proposition} \label{meancurv}
  The variation of the mean curvature is given by
  \begin{equation*}
    \partial_t H_t = dH_t (\partial_{t}^{T}) - L_{\Sigma_t}v_t.
  \end{equation*}
  \noindent where $L_{\Sigma_t} = \Delta_{\Sigma_t} + Ric(N_t,N_t) + |B_t|^2$ is the Jacobi operator.
\end{proposition}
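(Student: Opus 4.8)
The statement to prove is Proposition~\ref{meancurv}: the variation of the mean curvature is $\partial_t H_t = dH_t(\partial_t^T) - L_{\Sigma_t} v_t$, where $L_{\Sigma_t} = \Delta_{\Sigma_t} + Ric(N_t,N_t) + |B_t|^2$.

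\medskip

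The plan is to compute $\partial_t H_t$ directly from the formula $H_t = g^{ij} B_{ij}$, using the Leibniz rule and the two preceding propositions. First I would write $\partial_t H_t = (\partial_t g^{ij})B_{ij} + g^{ij}(\partial_t B_{ij})$. The first term is handled immediately by the second identity in the first Proposition of the Appendix, namely $\partial_t g^{ij} = -2g^{ik}g^{jl}g(\nabla_{\partial_k}\partial_t,\partial_l)$; contracting against $B_{ij}$ and splitting $\partial_t = \partial_t^T + v_t N_t$ produces a term quadratic in $B$ (contributing the $|B_t|^2 v_t$ piece) together with a term involving the tangential part $\partial_t^T$. For the second term, I would use $B_{ij} = g(\nabla_{\partial_i}\partial_j, N_t)$ (or equivalently $B_{ij} = -g(\nabla_{\partial_i} N_t, \partial_j)$, whichever is more convenient) and differentiate in $t$, commuting $\nabla_{\partial_t}$ and $\nabla_{\partial_i}$ at the cost of a curvature term $g(R(\partial_t,\partial_i)\partial_j, N_t)$, and using that $[\partial_t,\partial_i]=0$ so covariant derivatives commute up to curvature.

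\medskip

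The main point is to organize the resulting terms. After differentiating, the contributions naturally group as follows: (i) a curvature term which, after contracting with $g^{ij}$ and isolating the normal component $v_t N_t$, becomes $-Ric(N_t,N_t)v_t$ plus a term $-Ric(\partial_t^T, N_t)$ type expression; (ii) a term $-\Delta_{\Sigma_t} v_t$ coming from differentiating $N_t$ via the second identity of Proposition~\ref{varnormal} ($\nabla_{\partial_t} N_t = \nabla_{(\partial_t)^T} N_t - \nabla^{\Sigma_t} v_t$) and taking a further tangential derivative of the $-\nabla^{\Sigma_t} v_t$ piece; (iii) a term $-|B_t|^2 v_t$ from the Weingarten terms $g(\nabla_{\partial_i} N_t, \nabla_{\partial_j}(v_t N_t))$-type products together with the contribution from $\partial_t g^{ij}$; and (iv) all the remaining terms involving $\partial_t^T$, which must be shown to assemble into exactly $dH_t(\partial_t^T)$. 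To collect the tangential terms I would invoke the contracted Codazzi equation stated just before the proposition, $Ric(V,N_t) = g^{ik}(\nabla^{\Sigma_t}_{\partial_i} B)(V,\partial_k) - dH_t(V)$ with $V = \partial_t^T$: this is precisely the identity that converts the leftover curvature-and-derivative-of-$B$ terms into $dH_t(\partial_t^T)$, reproducing the familiar fact that reparametrizing within $\Sigma_t$ contributes only $dH_t(\partial_t^T)$.

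\medskip

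The step I expect to be the main obstacle is the bookkeeping in part (iv): correctly tracking the tangential component $\partial_t^T$ through the commutation of derivatives and through the Weingarten map, and recognizing that the combination of the curvature term $g^{ij}g(R(\partial_t^T,\partial_i)\partial_j, N_t)$ with the term $g^{ij}(\nabla^{\Sigma_t}_{\partial_i} B)(\partial_t^T,\partial_j)$ (which appears from differentiating $B_{ij}$ and rewriting $\nabla_{\partial_i}\nabla_{\partial_t^T}$) is exactly what the contracted Codazzi identity evaluates to. A secondary subtlety is sign conventions: the paper's convention $B(U,W) = g(\nabla_U N, W)$ and the sign in $L_{\Sigma_t}$ must be tracked carefully so that $-\Delta_{\Sigma_t} v_t - (Ric(N_t,N_t)+|B_t|^2)v_t = -L_{\Sigma_t} v_t$ comes out with the stated sign. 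Once the terms are grouped as above, the identity follows by direct comparison, and no boundary terms enter since this is a pointwise computation.
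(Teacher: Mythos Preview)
Your proposal is correct and follows essentially the same route as the paper: differentiate $H_t = g^{ij}g(\nabla_{\partial_i}N_t,\partial_j)$, use the formula for $\partial_t g^{ij}$, commute $\nabla_{\partial_t}$ past $\nabla_{\partial_i}$ to produce the curvature term, substitute $\nabla_{\partial_t}N_t = \nabla_{(\partial_t)^T}N_t - \nabla^{\Sigma_t}v_t$ from Proposition~\ref{varnormal}, and then invoke the contracted Codazzi equation to collapse the tangential leftovers into $dH_t(\partial_t^T)$. One small slip: with the paper's convention $B(U,W)=g(\nabla_U N,W)$ you have $B_{ij}=g(\nabla_{\partial_i}N_t,\partial_j)$ and $g(\nabla_{\partial_i}\partial_j,N_t)=-B_{ij}$, the opposite of what you wrote, but you already flagged sign tracking as a point of care and it does not affect the argument.
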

\begin{proof}
  Since $H_t= g^{ij}g(\nabla_{\partial_{i}} N_t, \partial_{j})$,  
  \begin{eqnarray*}
    \partial_t H_t & = & \partial_t g^{ij}g(\nabla_{\partial_{i}} N_t, \partial_{j}) + g^{ij}g(\nabla_{\partial_t}\nabla_{\partial_{i}} N_t,\partial_{j}) + g^{ij}g(\nabla_{\partial_i} N_t, \nabla_{\partial_t} \partial_{j}) \\
                   & = & -2g^{ik}g^{jl}g(\nabla_{\partial_{k}}\partial_{t},\partial_{l})g(\nabla_{\partial_{i}} N_t, \partial_{j}) + g^{ij}g(R(\partial_{t},\partial_{i})N_t,\partial_{j}) \\
                   &   & + g^{ij}g(\nabla_{\partial_i} \nabla_{\partial_t} N_t, \partial_{j}) + g^{ij}g(\nabla_{\partial_{i}} N_t, \nabla_{\partial_{j}}\partial_{t}) \\
                   & = & -2g^{ik}g(\nabla_{\partial_k} \partial_{t},\nabla_{\partial_{i}} N_{t}) - Ric(\partial_t,N_t) \\
                   &   & + g^{ij}g(\nabla_{\partial_i} \nabla_{\partial_t} N_t, \partial_{j}) + g^{ij}g(\nabla_{\partial_{i}} N_t, \nabla_{\partial_{j}}\partial_{t}) \\
                   & = & -g^{ij}g(\nabla_{\partial_{i}} N_t, \nabla_{\partial_{j}}\partial_{t}) - Ric(\partial_{t},N_t) \\
                   &   & +g^{ij}g(\nabla_{\partial_i} (\nabla_{\partial_t^T} N_t), \partial_{j}) - g^{ij}g(\nabla_{\partial_i}(\nabla^{\Sigma_t} v) ,\partial_{j}).
  \end{eqnarray*}
  \indent Now we use the contracted Codazzi equation:
  \begin{eqnarray*}
    Ric(\partial_t ^{T}, N_t) & = & g^{ij}(\nabla^{\Sigma_t}_{\partial_{i}}B)(\partial_{t}^T,\partial_{j}) -dH(\partial_t ^T)\\
                              & = & g^{ij}\partial_{i} g(\nabla_{\partial_{t}^T} N_t,\partial_{j}) - g^{ij}g(\nabla_{(\nabla_{\partial_{i}}\partial_t^T)^{T} } N_t,\partial_{j}) \\
                              &   & -g^{ij}g(\nabla_{\partial_t ^T} N_t,(\nabla_{\partial_{i}} {\partial_{j}})^T) - dH(\partial_t ^T) \\
                              & = & g^{ij}(\partial_{i} g(\nabla_{\partial_{t}^T} N_t,\partial_{j}) - g(\nabla_{\partial_t ^T} N_t,\nabla_{\partial_{i}} {\partial_{j}})) \\
                              &   & -g^{ij}g(\nabla_{\partial_{j}}N_t,(\nabla_{\partial_{i}}\partial_{t}^T)^T) - dH(\partial_t ^T)  \\
                              & = & g^{ij}g(\nabla_{\partial_{i}} (\nabla_{\partial_{t}^T} N_t) , \partial_{j}) - g^{ij}g(\nabla_{\partial_{j}} N_t, \nabla_{\partial_{i}}\partial_{t}^T)-dH(\partial_{t}^T).
  \end{eqnarray*}
  \indent Hence, canceling out the corresponding terms, we have
  \begin{eqnarray*}
    \partial_t H_t & = & - g^{ij}g(\nabla_{\partial_{i}} N_t, \nabla_{\partial_{j}}N_t)v_t - Ric(N_t,N_t)v_t \\
                   &   & + dH(\partial_t ^T) - g^{ij}g(\nabla_{\partial_i}(\nabla^{\Sigma_t} v_t) ,\partial_{j}).
  \end{eqnarray*}
  \indent The formula follows.
  \end{proof}
\indent Finally, we specialize the formulae above in the two particular cases we used in this paper. The proofs are immediate.
\begin{proposition} \label{normalvar}
  If $\Sigma_0$ is free boundary and $(\partial_{t})^T=0$ at $t=0$, then 
  \begin{eqnarray*}
     (\partial_t H_t)|_{t=0}  = -L_{\Sigma_0}v_0 & \text{and} & \partial_{t}g(N_t,X)|_{t=0} = -\frac{\partial v_0}{\partial \nu_0} + g(N_0,\nabla_{N_0} X)v_0.
  \end{eqnarray*}
\end{proposition}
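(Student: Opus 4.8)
The plan is to obtain both identities by specializing Propositions \ref{meancurv} and \ref{varnormal} to $t=0$, using the two standing hypotheses: $(\partial_t)^T=0$ at $t=0$ and $\Sigma_0$ free boundary. For the first identity, I would simply invoke Proposition \ref{meancurv}, which gives $\partial_t H_t = dH_t(\partial_t^T) - L_{\Sigma_t}v_t$; evaluating at $t=0$ and using $(\partial_t)^T|_{t=0}=0$ annihilates the first term and leaves $(\partial_t H_t)|_{t=0} = -L_{\Sigma_0}v_0$.

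For the second identity, I would differentiate $g(N_t,X)$ along $\partial\Sigma_t\subset\partial M$ by metric compatibility, writing $\partial_t g(N_t,X) = g(\nabla_{\partial_t}N_t,X) + g(N_t,\nabla_{\partial_t}X)$. For the first summand, Proposition \ref{varnormal} gives $\nabla_{\partial_t}N_t = \nabla_{(\partial_t)^T}N_t - \nabla^{\Sigma_t}v_t$, which at $t=0$ reduces to $-\nabla^{\Sigma_0}v_0$; pairing against $X$ and using the free boundary condition $X=\nu_0$ along $\partial\Sigma_0$ yields $-g(\nabla^{\Sigma_0}v_0,\nu_0) = -\tfrac{\partial v_0}{\partial\nu_0}$. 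For the second summand, I would note that along $\partial\Sigma$ the variational field $\partial_t$ stays tangent to $\partial M$ because $f_t(\partial\Sigma)\subset\partial M$ for all $t$, and that at $t=0$ one has $\partial_t = (\partial_t)^T + v_0 N_0 = v_0 N_0$; hence $\nabla_{\partial_t}X|_{t=0} = v_0\nabla_{N_0}X$ and the summand equals $v_0\, g(N_0,\nabla_{N_0}X)$. Adding the two contributions gives the stated formula.

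The only point deserving a moment's care is that $X$ is defined only on $\partial M$, so the covariant derivative $\nabla_{\partial_t}X$ at $t=0$ must be taken in a direction tangent to $\partial M$; this is exactly guaranteed by the free boundary condition, which forces $N_0\in T\partial M$ along $\partial\Sigma_0$ (since $N_0\perp\nu_0 = X$ there), so $\nabla_{N_0}X$ is a legitimate derivative of the field $X$ in a direction along $\partial M$. Beyond this observation, the proof is a direct substitution, so I expect no real obstacle.
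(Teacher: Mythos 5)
Your argument is correct and is exactly the specialization the paper has in mind when it says ``The proofs are immediate'': the first identity drops out of Proposition \ref{meancurv} once $(\partial_t)^T|_{t=0}=0$, and the second follows by metric compatibility together with Proposition \ref{varnormal} and the identification $X=\nu_0$, $\partial_t|_{t=0}=v_0N_0$ along $\partial\Sigma_0$. Your remark that $N_0\in T\partial M$ along $\partial\Sigma_0$ (needed so that $\nabla_{N_0}X$ is well defined for the boundary-only vector field $X$) is the right point to flag, and it is precisely guaranteed by the free boundary condition.
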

\begin{proposition} \label{cmcfreefol}
  If each $\Sigma_t$ is a constant mean curvature free boundary surface, then
  \begin{eqnarray*}
    \partial_t H_t  = - L_{\Sigma_t}v_{t} & \text{and} & \frac{\partial v_{t}}{\partial {\nu_{t}}} = g(N_{t},\nabla_{N_{t}}X)v_{t}.
  \end{eqnarray*}
\end{proposition}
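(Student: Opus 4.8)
The plan is to derive both identities from the general variation formulas already established in the Appendix, specializing to the constant mean curvature and free boundary hypotheses. The first identity is immediate: Proposition~\ref{meancurv} gives $\partial_t H_t = dH_t\big((\partial_t)^{T}\big) - L_{\Sigma_t} v_t$, and since each $\Sigma_t$ has constant mean curvature, the function $H_t$ is constant on $\Sigma_t$, so $dH_t \equiv 0$ on $\Sigma_t$ and hence $\partial_t H_t = -L_{\Sigma_t} v_t$.

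For the boundary identity I would differentiate the free boundary condition along the variation. Fix $p \in \partial \Sigma$. Because the variation keeps $\partial \Sigma$ inside $\partial M$, the curve $t \mapsto f(p,t)$ lies in $\partial M$, so $\partial_t$ is tangent to $\partial M$ along $\partial \Sigma_t$; moreover the free boundary hypothesis $\nu_t = X$ is equivalent to $g(N_t, X) = 0$ along $\partial \Sigma_t$ for every $t$. Differentiating this last identity in $t$ along the curve gives
\[
  g(\nabla_{\partial_t} N_t, X) + g(N_t, \nabla_{\partial_t} X) = 0 .
\]
I would then substitute $\nabla_{\partial_t} N_t = \nabla_{(\partial_t)^T} N_t - \nabla^{\Sigma_t} v_t$ from Proposition~\ref{varnormal}, use $X = \nu_t$ on $\partial \Sigma_t$ to recognize $g(\nabla^{\Sigma_t} v_t, X) = \partial v_t/\partial \nu_t$, and decompose $\partial_t = (\partial_t)^T + v_t N_t$ inside the term $g(N_t, \nabla_{\partial_t} X)$, which contributes $v_t\, g(N_t, \nabla_{N_t} X)$.

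The one point that needs care is the tangential part $(\partial_t)^T$. It is tangent to $\Sigma_t$ by definition, and since $\partial_t$ is tangent to $\partial M$ while $N_t$ is tangent to $\partial M$ along $\partial \Sigma_t$ (this is the free boundary condition, because $\nu_t = X$ forces $N_t \perp X$), the vector $(\partial_t)^T = \partial_t - v_t N_t$ is tangent to $\partial M$ as well, hence tangent to $\partial \Sigma_t$. Differentiating $g(N_t, X) = 0$ along $\partial \Sigma_t$ in the direction $(\partial_t)^T$ shows $g(\nabla_{(\partial_t)^T} N_t, X) = - g(N_t, \nabla_{(\partial_t)^T} X)$, so the two $(\partial_t)^T$-contributions cancel, and what remains is precisely $\partial v_t/\partial \nu_t = g(N_t, \nabla_{N_t} X)\, v_t$. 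The only (mild) obstacle is this bookkeeping of which directional derivatives are tangent to $\partial \Sigma_t$, which is what makes the cancellation visible; once that is noted, everything reduces to a direct substitution into the formulas of the Appendix.
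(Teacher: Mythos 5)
Your proof is correct and fills in exactly what the paper declares ``immediate'' after Proposition~\ref{varnormal}: the first identity is the specialization of Proposition~\ref{meancurv} with $dH_t\equiv 0$, and the second comes from differentiating $g(N_t,X)=0$ in $t$, substituting the formula for $\nabla_{\partial_t}N_t$ from Proposition~\ref{varnormal}, and decomposing $\partial_t=(\partial_t)^T+v_tN_t$. The one nontrivial point is precisely the one you isolate: since $\partial_t$ is tangent to $\partial M$ and the free boundary condition makes $N_t$ tangent to $\partial M$ along $\partial\Sigma_t$, the tangential part $(\partial_t)^T$ lies in $T(\partial\Sigma_t)$, so differentiating $g(N_t,X)\equiv 0$ along $\partial\Sigma_t$ in that direction cancels the two $(\partial_t)^T$-terms. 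This is the same route the paper takes; you have merely made the cancellation explicit rather than leaving it implicit.
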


\bibliographystyle{amsbook}

\begin{thebibliography}{99}

  \bibitem[1]{Bra}  H. Bray, \textit{The Penrose inequality in general relativity and volume comparison theorems involving scalar curvature}, Thesis, Stanford University (1997).
  
  \bibitem[2]{BBN}  H. Bray, S. Brendle and A. Neves, \textit{Rigidity of area-minimizing two-spheres in three-manifolds}, Comm. Anal. Geom. 18 (2010), no 4., 821-830.
  
  \bibitem[3]{CG}   M. Cai and G. Galloway, \textit{Rigidity of area-minimizing tori in 3-manifolds of nonnegative scalar curvature}, Comm. Anal. Geom. 8 (2000), no 3., 565-573.
  
  \bibitem[4]{CFP}  J. Chen, A. Fraser and C. Pang, \textit{Minimal immersions of compact bordered Riemann surfaces with free boundary}, arXiv:1209.1165.
  
  \bibitem[5]{FCS}  D. Fischer-Colbrie and R. Schoen, \textit{The structure of complete stable minimal surfaces in 3-manifolds of nonnegative scalar curvature}, Comm. Pure Appl. Math. 33 (1980), no 2, 199-211.
  
  \bibitem[6]{HY}   G. Huisken and S.-T. Yau, \textit{Definition of center of mass for isolated physical systems and unique foliations by stable spheres with constant mean curvature}, Invent. Math. 124 (1996), no. 1-3, 281-311.
  
  \bibitem[7]{KW}   J. Kazdan and F. Warner, \textit{Prescribing curvatures},  Differential Geometry, Proc. Sympos. Pure Math., vol. 27, Amer. Math. Soc., Providence, R.I. (1975) 309-319.
  
  \bibitem[8]{LU}   O. Ladyzhenskaia and N. Uralt'seva, \textit{Linear and quasilinear elliptic equations}, Academic Press, New York (1968) 495 pp. 
  
  \bibitem[9]{L}    M. Li, \textit{Rigidity of area-minimizing disks in three-manifolds with boundary}, pre\-print.
  
  \bibitem[10]{MY1}  W. Meeks and S.T. Yau, \textit{Topology of three-dimensional manifolds and the embedding problems in minimal surface theory}, Ann. of Math. (2) 112 (1980), no. 3, 441-484.
  
  \bibitem[11]{MY2} W. Meeks and S.T. Yau, \textit{The existence of embedded minimal surfaces and the problem of uniqueness}, Math. Z. 179 (1982), no. 2, 151-168.
  
  \bibitem[12]{MM}  M. Micallef and V. Moraru, \textit{Splitting of 3-Manifolds and rigidity of area-minimizing surfaces},  to appear in Proceedings of the American Mathematical Society.
  
  \bibitem[13]{N}   I. Nunes, \textit{Rigidity of area-minimizing hyperbolic surfaces in three-manifolds}, J. of Geom. Anal., published electronically 20 December 2011, doi: 10.1007/s12220-011-9287-8.
  
  \bibitem[14]{SY}  R. Schoen and S.T. Yau, \textit{Existence of incompressible minimal surfaces and the to\-po\-lo\-gy of three dimensional manifolds with non-negative scalar curvature}, Ann. of Math (2) 110 (1979), no. 1, 127-142.
  
  \bibitem[15]{SZ}  Y. Shen and S. Zhu, \textit{Rigidity of stable minimal hypersurfaces}, Math. Ann. 309 (1997), no. 1, 107-116
  
  \bibitem[16]{S}   L. Simon, \textit{Lectures on geometric measure theory}, Proceedings of the Centre for Mathematical Analysis, Australian National University, Canberra, (1983) vii+272pp.
  
\end{thebibliography}

\end{document}